\documentclass[]{interact}
\usepackage{epstopdf,cite}
\usepackage[caption=false]{subfig}
\usepackage[numbers,sort&compress]{natbib}
\bibpunct[, ]{[}{]}{,}{n}{,}{,}
\makeatletter
\def\NAT@def@citea{\def\@citea{\NAT@separator}}
\makeatother

\theoremstyle{plain}
\newtheorem{theorem}{Theorem}[section]
\newtheorem{lemma}[theorem]{Lemma}
\newtheorem{corollary}[theorem]{Corollary}

\theoremstyle{definition}
\newtheorem{definition}[theorem]{Definition}

\theoremstyle{remark}
\newtheorem{remark}{Remark}

\usepackage{graphicx,color}




\newcommand{\R}{\mathbb R}

\def\dom{\mathop{\rm dom\,}}

\def\argmin{\mathop{\rm argmin\,}}


\usepackage{hyperref}
\hypersetup{
colorlinks=true,
linkcolor=red,
citecolor=red,
filecolor=magenta,
urlcolor=cyan
}
\usepackage{empheq}

\definecolor{myblue}{rgb}{.8, .8, 1}
\newcommand*\mybluebox[1]{
\colorbox{myblue}{\hspace{1em}#1\hspace{1em}}}

\usepackage{cleveref}
\crefname{chapter}{Chapter}{Chapters}
\crefname{item}{item}{items}
\crefname{figure}{Figure}{Figures}
\crefname{theorem}{Theorem}{Theorems}
\crefname{lemma}{Lemma}{Lemmas}
\crefname{proposition}{Proposition}{Propositions}
\crefname{corollary}{Corollary}{Corollarys}
\crefname{definition}{Definition}{Definitions}
\crefname{fact}{Fact}{Facts}
\crefname{example}{Example}{Examples}
\crefname{algorithm}{Algorithm}{Algorithms}
\crefname{remark}{Remark}{Remarks}
\crefname{note}{Note}{Notes}
\crefname{notation}{Notation}{Notations}
\crefname{case}{Case}{Cases}
\crefname{exercise}{Exercise}{Exercises}
\crefname{question}{Question}{Questions}
\crefname{claim}{Claim}{Claims}
\crefname{enumi}{}{}
\usepackage{empheq}

\begin{document}


\title{An inertial proximal splitting algorithm for hierarchical bilevel equilibria in Hilbert spaces}
\author{
\name{Aicha Balhag\textsuperscript{a}, Zakaria Mazgouri\textsuperscript{b}, Hassan Riahi\textsuperscript{c} and Michel Th\'era\textsuperscript{d}   }
\affil{\textsuperscript{a}Ecole Normale Sup\'erieure, Universit\'e Cadi Ayyad, Marrakesh, Morocco \\ 
	\textsuperscript{b}National School of Applied Sciences, Sidi Mohamed Ben Abdellah University, 30000 Fez, Morocco\\ORCID 0000-0003-0131-4741\\
	\textsuperscript{c}Universit\'e Cadi Ayyad, Facult\'e des Sciences Semlalia, D\'epartement de Math\'ematiques, \,B.P. 2390, Marrakesh, Morocco \\ 
	\textsuperscript{d}XLIM UMR-CNRS 7252, Universit\'e de Limoges, Limoges, France \\ORCID 0000-0001-9022-6406.
}
\vspace{5mm}
{}
\vspace{5mm}
\textit{We dedicate this article to  Professor Werner Oettli whose work strongly influenced the theory of variational inequalities, equilibrium problems,  and optimization.
}
}
\maketitle

\maketitle

\begin{abstract}
In this article, we aim to approximate a solution to the bilevel equilibrium problem $\mathbf{(BEP})$ for short: find \(\bar{x} \in \mathbf{S}_f\) such that  
$
g(\bar{x}, y) \geq 0, \,\, \forall y \in \mathbf{S}_f,
$
where  
$
\mathbf{S}_f = \{ u \in \mathbf{K} : f(u, z) \geq 0, \forall z \in \mathbf{K} \}.
$
Here, \(\mathbf{K}\) is a closed convex subset of a real Hilbert space \(\mathcal{H}\), and \( f \) and \( g \) are two real-valued bifunctions defined on \(\mathbf{K} \times \mathbf{K}\).  
We propose an inertial version of the proximal splitting algorithm introduced by Z. Chbani and H. Riahi:  
\textit{Weak and strong convergence of prox-penalization and splitting algorithms for bilevel equilibrium problems}.  
\textit{Numer. Algebra Control Optim.}, 3 (2013), pp. 353-366.  
Under suitable conditions, we establish the weak and strong convergence of the sequence generated by the proposed iterative method. {We also report a numerical example illustrating our theoretical result.}
\end{abstract}

\begin{keywords}
Bilevel equilibrium problems; Monotone bifunctions; Proximal algorithm; Splitting
algorithm; Weak and strong convergence; Equilibrium Fitzpatrick transform.
\end{keywords}

 \begin{amscode} 90C33; 49J40; 46N10; 65K15; 65K10 \end{amscode}

\section{Introduction}
Throughout $\mathcal{H}$ is a real Hilbert space with identity operator $ \text{Id}$, scalar product $\langle \cdot,\cdot \rangle$  {and} associated norm $\Vert \cdot \Vert$. 
Let $\textbf{K}$ be a nonempty, closed, and convex subset {of $\mathcal{H}$}, and let $f, g:\textbf{K}\times \textbf{K} \rightarrow \R$ be two real-valued bifunctions. This work focuses on the following bilevel equilibrium problem:

Find $\bar{x} \in \textbf{S}_f$ such that
\begin{empheq}[box =\mybluebox]{equation*}\label{BEP}
\tag{$\mathbf{BEP}$ }%
g(\bar{x},y)\geq 0, \;\; \forall y\in \textbf{S}_f,
\end{empheq}
where the constraint set $\textbf{S}_f$ represents the solution set of the following equilibrium problem associated with $f$:

Find $y\in \textbf{K}$ such that
\begin{empheq}[box =\mybluebox]{equation*}\label{EP}
\tag{$\mathbf{EP}$}%
f(y,u)\geq 0, \;\; \forall u\in \textbf{K}.
\end{empheq}
In other words, problem $\mathbf{(BEP)}$ is an equilibrium problem where the constraint set consists of solutions to another equilibrium problem. The formulation of $\mathbf{(BEP)}$ is widely recognized as a general framework that unifies many classical problems in nonlinear analysis and optimization. These include hierarchical minimization problems, mathematical programming, optimization problems with equilibrium or fixed point constraints, optimal control problems governed by state equations defined through variational or quasi-variational inequalities, Nash equilibrium concepts, and more. For further details, see \cite{Dempe} and references therein.

Throughout this work, we assume that the solution set of $\mathbf{(BEP)}$, denoted by $\mathbf{S}$, is nonempty.
\vskip 2mm 
Although bilevel equilibrium problems are interesting, they are challenging to solve due to the presence of a bifunction at both levels.

The classical approach for approximating a solution to $\mathbf{(BEP)}$ is the \textit{regularized proximal point algorithm}  $\textbf{(RPPM)}$, introduced by Moudafi \cite{moud2}. This algorithm generates a sequence $\{x_n\}$ according to the iteration:
\[
x_{n+1}=J_{\lambda_n}^{f+\beta_ng} (x_n),
\]
which satisfies
\begin{empheq}[box =\mybluebox]{equation}\label{PPM-BEP}
f(x_{n+1},y)+\beta_n g(x_{n+1},y)+\dfrac{1}{\lambda_n}\langle x_{n+1}-x_n, y-x_{n+1} \rangle \geq 0, \;\; \forall y\in \textbf{K},
\end{empheq}
where $\{\beta_n\}$ and $\{\lambda_n\}$ are two sequences of nonnegative real numbers, and $J_{\lambda_n}^{f+\beta_ng}$ is the resolvent associated with the parametrized family of bifunctions $f+\beta_ng$.

It turns out that the sequence generated by this algorithm converges weakly to a solution of $\mathbf{(BEP)}$ under the conditions:
\[
\liminf_{n\rightarrow +\infty} \lambda_n>0, \quad \sum_{n=0}^{+\infty} \lambda_n \beta_n <+\infty, \quad \text{and} \quad \|x_{n+1}-x_n\|=o(\beta_n).
\]
However, a major challenge in applying this method lies in the last assumption, as it requires choosing a suitable control sequence $(\beta_n)$ without knowing the convergence rate of $\|x_{n+1} - x_n\|$.

To address this issue, an alternative proximal scheme was proposed in \cite{CR1}, which generates the sequence using the iteration:
\[
x_{n+1}:=J_{\lambda_n}^{\beta_nf+g} (x_n),
\]
i.e.,
\begin{empheq}[box =\mybluebox]{equation}\label{PPM-naco}
\beta_n f(x_{n+1},y)+g(x_{n+1},y)+\dfrac{1}{\lambda_n}\langle x_{n+1}-x_n, y-x_{n+1} \rangle \geq 0, \;\; \forall y\in \textbf{K}.
\end{empheq}

This modification overcomes the difficulty highlighted in \cite{moud2} by introducing a geometric assumption based on the Fenchel conjugate function associated with the bifunction $f$. Consequently, the authors  in \cite{CR1} analyzed both weak and strong convergence of the algorithm to a solution of $\mathbf{(BEP)}$.\\

In recent years, there has been significant interest in incorporating inertial terms into various proximal-type algorithms. Notably, the inclusion of inertial terms enhances the numerical performance of these algorithms by accelerating their convergence. This acceleration occurs because each new iterate is computed as a combination of the previous two iterates, which can also be interpreted as an implicit discretization of second-order dynamical systems in time.

The origins of these methods can be traced back to the work of Alvarez and Attouch \cite{AA}. Inspired by Polyak's heavy ball method \cite{polyak1964some}, originally introduced in the context of minimizing smooth convex functions, the authors in \cite{AA} developed the inertial proximal point algorithm for finding zeros of general maximally monotone operators.

A more recent investigation \cite{balhag2022weak} extended the proximal algorithm \eqref{PPM-naco} by incorporating inertial terms. The authors in \cite{balhag2022weak} proposed the following inertial proximal scheme for solving $\mathbf{(BEP)}$:
\[
x_{n+1}:=J_{\lambda_n}^{\beta_nf+g} (y_n),
\]
which satisfies
\begin{empheq}[box =\mybluebox]{equation}\label{algo-siopt}
\beta_n f(x_{n+1},y)+g(x_{n+1},y)+\frac{1}{\lambda_n}\langle x_{n+1}-y_n, y-x_{n+1}\rangle \geq 0, \;\;\forall y\in \textbf{K},
\end{empheq}
where the inertial term is defined as $y_n:=x_n+\alpha_n(x_n-x_{n-1})$, with $\{\alpha_n\}$ being a nonnegative sequence satisfying $\{\alpha_n\} \subseteq [0, 1]$.

It was shown that under suitable conditions on the positive parameters $\beta_n$ and $\lambda_n$, and by assuming that the sequence $\{\alpha_n\}$ is nondecreasing and satisfies $\{\alpha_n\} \subseteq [0,\alpha]$ for some $\alpha\geq 0$ such that $0\leq \alpha< \frac{1}{3}$, the algorithm converges weakly to a solution of $\mathbf{(BEP)}$. This result was established using a discrete counterpart of the geometric condition introduced in \cite{CMR}. Moreover, the authors also proved strong convergence under a geometric hypothesis and even in the absence of such {assumption}.\\

Another important class of iterative methods in optimization is given by the so-called splitting methods. Since evaluating the resolvent operator of the sum of two maximally monotone operators (or monotone bifunctions) is generally difficult, an alternative approach relies on splitting techniques. These methods are based on the idea of separately computing the resolvents of each operator (or bifunction), which is often easier to evaluate.

In the context of monotone inclusions, various splitting-type algorithms have been extensively studied. For a detailed overview, the interested reader may refer to \cite{lorenz2015inertial, moudafi2003convergence, passty1979ergodic} and the references therein.

To approximate a solution of
\begin{equation}\label{Anc}
0\in Ax+N_C(x),
\end{equation}
where $A: \mathcal{H} \rightrightarrows \mathcal{H}$ is a maximally monotone operator and $N_C$ is the outward normal cone to a closed convex set $C\subset \mathcal{H}$, Attouch et al. \cite{attouch2011prox} proposed the following splitting algorithm:
\begin{equation}
\left\{\begin{array}{l}
y_n=(I+\lambda_nA)^{-1}x_{n-1}, \\
x_n=(I+\lambda_n\beta_n\partial \psi)^{-1}y_n,
\end{array}\right.
\end{equation}
where $\{\beta_n\}$ and $\{\lambda_n\}$ are sequences of nonnegative real numbers, and $\psi: \mathcal{H} \rightarrow \R\cup \{+\infty\}$ acts as a penalization function enforcing the constraint $x\in C$. The authors established conditions ensuring either the ergodic or strong convergence of the sequence generated by their algorithm to a solution of \eqref{Anc}.

Motivated by these works, several studies have extended splitting methods to the bilevel equilibrium context. The algorithm proposed in \cite{passty1979ergodic} was later generalized by Moudafi \cite{moudafi2009convergence} for solving the problem
\[
f(x,y)+g(x,y)\geq 0, \quad \forall y\in \textbf{K}.
\]
Chbani and Riahi \cite{CR1} introduced a similar splitting algorithm:
\begin{equation}\label{psm}
\left\{\begin{array}{l}
g(z_{n+1},y)+\frac{1}{\lambda_n}\langle z_{n+1}-x_n, y-z_{n+1}\rangle \geq 0, \quad \forall y\in \textbf{K}, \\
\beta_n f(x_{n+1},y)+\frac{1}{\lambda_n}\langle x_{n+1}-z_{n+1}, y-x_{n+1}\rangle \geq 0, \quad \forall y\in \textbf{K},
\end{array}\right.
\end{equation}
which includes a viscosity term $\beta_n$ to ensure convergence to a solution of $\mathbf{(BEP)}$.

\medskip
In this work, we focus on proximal splitting methods for solving $\mathbf{(BEP)}$ and aim to incorporate inertial terms into the algorithm \eqref{psm}. Our proposed method combines the inertial iterative scheme \eqref{algo-siopt} with the splitting algorithm \eqref{psm}. Inspired by the approach presented in \cite{balhag2022weak}, we introduce the following numerical scheme: 
\smallskip
\hrule
\vspace{2mm}
\noindent {\bf Algorithm:} \; Inertial Proximal Splitting Algorithm $\textbf{(IPSA)}$
\vspace{2mm}
\hrule
\vspace{2mm}
\noindent {\bf Initialization:} Choose positive sequences $\{\beta_n\}$, $\{\lambda_n\}$, and a nonnegative sequence $\{\alpha_n\} \subseteq [0,1]$. Select arbitrary initial points $x_0, x_1 \in \textbf{K}$.
\vspace{2mm}
\hrule
\vspace{2mm}
\noindent {\bf Iterative Step:} For every $n\geq 1$, given the current iterates $x_{n-1}, x_n \in \textbf{K}$, set
\[
y_n:=x_n+\alpha_n(x_n-x_{n-1}),
\]
and define $x_{n+1}\in \textbf{K}$ by
\[
x_{n+1}:=J_{\lambda_n}^{\beta_nf} \circ J_{\lambda_n}^{g} (y_n),
\]
i.e.,
\begin{equation}\label{algo}
\left\{\begin{array}{l}
g(z_{n+1},y)+\frac{1}{\lambda_n}\langle z_{n+1}-y_n, y-z_{n+1}\rangle \geq 0, \quad \forall y\in \textbf{K}, \\
\beta_n f(x_{n+1},y)+\frac{1}{\lambda_n}\langle x_{n+1}-z_{n+1}, y-x_{n+1}\rangle \geq 0, \quad \forall y\in \textbf{K}.
\end{array}\right.
\end{equation}
\hrule
\vspace{5mm}
In this algorithm, $\{\lambda_n\}$ represents the sequence of step sizes, $\{\beta_n\}$ the sequence of penalization parameters, and $\{\alpha_n\}$ the sequence of inertial parameters. If $\alpha_n = 0$ for all $n\geq 0$, then $\textbf{(IPSA)}$ reduces to the original splitting algorithm \eqref{psm}.\\

In what follows, we outline the structure of the remainder of the paper.

In Section 2, we present the necessary background for our analysis. Section 3 is dedicated to the convergence analysis of algorithm $\textbf{(IPSA)}$. We establish conditions under which the sequence generated by this algorithm converges weakly or strongly to a solution of $\mathbf{(BEP})$.

More specifically, in Subsection 3.1, using a discrete counterpart \eqref{fitz-discret} of the geometric condition introduced in \cite{CMR} -- formulated in terms of the Fitzpatrick transform of the bifunction $f$ -- we prove (see Theorem \ref{thm-weak}) that the sequence generated by $\textbf{(IPSA)}$ weakly converges to a solution of $\mathbf{(BEP})$, provided that:
\[
{\left\{\lambda_n \right\} \in \ell^2 \setminus \ell^1, \quad \liminf_{n\rightarrow +\infty}\lambda_n \beta_n>0},
\]
and that the sequence $\{\alpha_n\}$ is nondecreasing and satisfies $\{\alpha_n\} \subseteq [0,\alpha]$ for some $\alpha \geq 0$ with $\alpha< \frac{\sqrt{3}-1}{4}$.

In Subsection 3.2, by strengthening the monotonicity assumption on the upper-level bifunction $g$ and without requiring the geometric assumption \eqref{fitz-discret}, we show (see Theorem \ref{strong1}) that the sequence generated by $\textbf{(IPSA)}$ strongly converges to the unique solution of $\mathbf{(BEP})$ under the conditions:
\[
\{\alpha_n\} \text{ is nondecreasing, } \,\, \{\alpha_n\} \subseteq [0,\alpha] \text{ with } 0\leq \alpha< \frac{\sqrt{3}-1}{4},
\]
\[
\displaystyle\sum_{n\geq0} \lambda_n = +\infty, \, \lim_{n\rightarrow +\infty}\lambda_{n}=0, \, {\lim_{n\rightarrow +\infty} \beta_n = +\infty} \quad and \quad \liminf_{n\rightarrow +\infty}\lambda_n\beta_n>0.
\]

A key advantage of our approach is that it ensures convergence without imposing restrictive assumptions on the trajectories. To the best of our knowledge, such a proximal inertial {splitting} scheme has not yet been studied for the two-level problem $\mathbf{(BEP})$.

In Section 4, we present a numerical experiment to illustrate our theoretical results. The paper concludes with some technical auxiliary results provided in the Appendix.

\section{Notations and background}
For the reader's convenience, we first recall some well-known concepts related to the monotonicity and continuity of real-valued bifunctions.

\begin{definition}\label{d1}
A bifunction $f:\textbf{K}\times \textbf{K} \rightarrow \mathbb{R}$ is said to be:
\begin{itemize}
    \item [$(i)$] \textit{Monotone} if
    \[
    f(x,y)+f(y,x)\leq0, \quad \forall x,y \in \textbf{K}.
    \]
    \item [$(ii)$] \textit{$\gamma$-strongly monotone} if there exists $\gamma >0$ such that
    \[
    f(x,y)+f(y,x)\leq -\gamma \|x-y\|^2, \quad \forall x,y \in \textbf{K}.
    \]
    \item [$(iii)$] \textit{Upper hemicontinuous} if
    \[
    \lim_{t\searrow0} f(tz+(1-t)x,y)\leq f(x,y), \quad \forall x,y,z\in \textbf{K}.
    \]
    \item [$(iv)$] \textit{Lower semicontinuous at $y$ with respect to the second argument} if
    \[
    f(x,y)\leq \liminf_{w\rightarrow y}f(x,w), \quad \forall x\in \textbf{K}.
    \]
    \item [$(v)$] \textit{An equilibrium bifunction} if, for each $x\in \textbf{K}$, we have $f(x,x)=0$ and $ f(x,\cdot)$ is convex and lower semicontinuous.
\end{itemize}
\end{definition}

The dual equilibrium problem associated with the bifunction $f$ on $\textbf{K}$ is stated as follows:

Find $\bar{x}\in \textbf{K}$ such that
\begin{equation*}\label{dep}
\tag{\textbf{DEP}}
f(y,\bar{x})\leq 0, \quad \forall y\in \textbf{K}.
\end{equation*}
The set of solutions to $\textbf{(DEP)}$ is called the \textit{Minty solution set}. The following result establishes the connection between Minty equilibria and standard equilibria.

\begin{lemma}[Minty's Lemma, \cite{BO}]\label{lem-Minty}
\begin{itemize}
    \item[(i)] If $f$ is monotone, then every solution of $\textbf{(EP)}$ is also a solution of $\textbf{(DEP)}$.
    \item[(ii)] Conversely, if $f$ is upper hemicontinuous and an equilibrium bifunction, then every solution of $\textbf{(DEP)}$ is a solution of $\textbf{(EP)}$.
\end{itemize}
\end{lemma}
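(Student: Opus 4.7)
My plan is to prove each direction of Minty's Lemma by a short direct argument from the definitions recalled above, so the proof splits naturally into two independent pieces.

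For part (i), the argument is essentially a one-liner. If $\bar{x}\in \textbf{S}_f$, then by definition $f(\bar{x}, y) \geq 0$ for every $y \in \textbf{K}$. Applying the monotonicity hypothesis $f(\bar{x}, y) + f(y, \bar{x}) \leq 0$ yields $f(y, \bar{x}) \leq -f(\bar{x}, y) \leq 0$, so $\bar{x}$ solves \textbf{(DEP)}. No auxiliary constructions are needed here.

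For part (ii), I would use the classical convex-combination trick. Fix a solution $\bar{x}$ of \textbf{(DEP)} and an arbitrary $z \in \textbf{K}$, and for $t \in (0,1]$ set $y_t := tz + (1-t)\bar{x}$. Since $\textbf{K}$ is convex, $y_t \in \textbf{K}$, hence $f(y_t, \bar{x}) \leq 0$ by the \textbf{(DEP)} property of $\bar{x}$. The equilibrium structure of $f$ together with the convexity of $f(y_t, \cdot)$ then gives
\[
0 = f(y_t, y_t) = f\bigl(y_t, \, tz + (1-t)\bar{x}\bigr) \leq t\, f(y_t, z) + (1-t)\, f(y_t, \bar{x}) \leq t\, f(y_t, z).
\]
Dividing by $t > 0$, I obtain $f(y_t, z) \geq 0$ for all $t \in (0,1]$.

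It remains to pass to the limit $t \searrow 0$, which is where the upper hemicontinuity of $f$ enters, and this is the only delicate point. Reading the defining inequality $\lim_{t\searrow 0} f(tz + (1-t)\bar{x}, z) \leq f(\bar{x}, z)$ in the $\limsup$ sense (the standard interpretation of such upper-type continuity conditions) and combining it with the nonnegativity of $f(y_t, z)$ yields
\[
0 \leq \limsup_{t \searrow 0} f(y_t, z) \leq f(\bar{x}, z).
\]
Since $z \in \textbf{K}$ was arbitrary, this shows $\bar{x}$ solves \textbf{(EP)}, completing the proof. I do not anticipate a real obstacle; the whole argument is a clean reduction to the algebraic/convex properties already built into the notion of an equilibrium bifunction.
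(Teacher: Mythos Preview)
Your proof is correct and is the standard argument for Minty's Lemma; note that the paper does not actually give its own proof of this statement but merely cites it from Blum--Oettli \cite{BO}, so there is no in-paper argument to compare against. The only small remark is that the paper's Definition~\ref{d1}(iii) writes $\lim$ rather than $\limsup$, but your reading of it as an upper-limit condition is the intended one and is exactly what the passage to the limit in part~(ii) requires.
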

The following lemma introduces the notion of the \textit{resolvent} associated with bifunctions, a key concept in our approach to solving $\mathbf{(BEP)}$.

\begin{lemma}\cite{CR2} \label{lem0}
Let $f:\textbf{K}\times \textbf{K}\rightarrow \mathbb{R}$ be a monotone equilibrium bifunction. Then, the following statements are equivalent:
\begin{itemize}
    \item [(i)] $f$ is maximal: If $(x,u)\in \textbf{K}\times \mathcal{H}$ satisfies
    \[
    f(x,y)\leq \langle u,x-y \rangle, \quad \forall y\in \textbf{K},
    \]
    then it follows that
    \[
    f(x,y)+\langle u,x-y \rangle \geq 0, \quad \forall y\in \textbf{K}.
    \]
    \item [(ii)] For each $x \in \mathcal{H}$ and $\lambda > 0$, there exists a unique $z_{\lambda}=J^{f}_{\lambda}(x)\in \textbf{K}$, called the resolvent of $f$ at $x$, such that
    \begin{equation}\label{aa}
    \lambda f(z_{\lambda},y) +\langle y-z_{\lambda} ,z_{\lambda}-x  \rangle \geq 0, \quad \forall y\in \textbf{K}.
    \end{equation}
\end{itemize}
Moreover, $\bar{x}\in \textbf{S}_f$ if and only if $\bar{x} = J^{f}_{\lambda}(\bar{x})$ for every $\lambda>0$, which is also equivalent to $\bar{x} = J^{f}_{\lambda}(\bar{x})$ for some $\lambda>0$.
\end{lemma}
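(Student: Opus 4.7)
The proof naturally splits into the equivalence $(i) \Leftrightarrow (ii)$ and the ``moreover'' characterization of $\textbf{S}_f$. I would dispatch the moreover part first since it follows immediately from the defining inequality of the resolvent: plugging $z_\lambda = \bar x$ into \eqref{aa} with $x = \bar x$ gives $\lambda f(\bar x, y) \geq 0$ for all $y \in \textbf{K}$, which is exactly the condition $\bar x \in \textbf{S}_f$; conversely, if $\bar x \in \textbf{S}_f$ then $\bar x$ itself satisfies the resolvent inequality centered at $\bar x$, so uniqueness in $(ii)$ forces $J_\lambda^f(\bar x) = \bar x$. Since the condition $f(\bar x, y) \geq 0$ is independent of $\lambda > 0$, the three formulations -- ``for every $\lambda > 0$'', ``for some $\lambda > 0$'', and ``$\bar x \in \textbf{S}_f$'' -- coincide.

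For $(ii) \Rightarrow (i)$, given a pair $(x, u) \in \textbf{K} \times \mathcal{H}$ satisfying $f(x, y) \leq \langle u, x-y\rangle$ for all $y \in \textbf{K}$, I would form the resolvent at the shifted point $z_\lambda := J_\lambda^f(x + \lambda u)$, which exists and is unique by $(ii)$, and which satisfies
\[
\lambda f(z_\lambda, y) + \langle y - z_\lambda, z_\lambda - x - \lambda u\rangle \geq 0, \quad \forall y \in \textbf{K}.
\]
Testing this with $y = x$ yields a lower bound on $\lambda f(z_\lambda, x)$ in terms of $\|z_\lambda - x\|^2$ and $\langle x - z_\lambda, u\rangle$. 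Applying the monotonicity of $f$ to eliminate $f(z_\lambda, x)$ via $f(z_\lambda, x) \leq -f(x, z_\lambda)$, and invoking the hypothesis at $y = z_\lambda$ to bound $f(x, z_\lambda)$ from above by $\langle u, x - z_\lambda\rangle$, I would chain the three inequalities and aim to extract an estimate of the form $\|z_\lambda - x\|^2 \leq 0$, which forces $z_\lambda = x$. Substituting $z_\lambda = x$ back into the resolvent inequality collapses it to $\lambda f(x, y) - \lambda \langle y - x, u\rangle \geq 0$, which rearranges to $f(x, y) + \langle u, x - y\rangle \geq 0$, precisely the conclusion of $(i)$.

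For $(i) \Rightarrow (ii)$, I would fix $x \in \mathcal{H}$ and $\lambda > 0$ and introduce the regularized bifunction $g(z, y) := \lambda f(z, y) + \langle y - z, z - x\rangle$ on $\textbf{K} \times \textbf{K}$. First I would verify that $g$ inherits from $f$ the structure of an equilibrium bifunction (with $g(z, z) = 0$, convexity and lower semicontinuity in the second argument) and is moreover $1$-strongly monotone, since
\[
g(z_1, z_2) + g(z_2, z_1) = \lambda\bigl(f(z_1, z_2) + f(z_2, z_1)\bigr) - \|z_1 - z_2\|^2 \leq -\|z_1 - z_2\|^2.
\]
Uniqueness of a solution to $g(z, y) \geq 0$ for all $y \in \textbf{K}$ is immediate from strong monotonicity (any two candidates would satisfy $\|z_1 - z_2\|^2 \leq 0$). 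For existence I would invoke a Ky Fan / KKM-type minimax theorem applied to $g$, with the coercivity supplied by the quadratic regularization; here the maximality hypothesis $(i)$ plays the role analogous to maximality in Minty's theorem, ruling out pathological boundary behaviour and guaranteeing solvability for arbitrary $x \in \mathcal{H}$.

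The main obstacle will be $(ii) \Rightarrow (i)$: the precise chain of substitutions that pins down $z_\lambda = x$ needs to balance the resolvent inequality, the monotonicity of $f$, and the hypothesis very carefully, and there is real risk of a sign or factor slipping out of alignment. A secondary difficulty is the Ky Fan / KKM existence step in $(i) \Rightarrow (ii)$ in the noncompact Hilbert setting, where the maximality assumption must be deployed exactly where a surjectivity-type range argument would be used for maximal monotone operators.
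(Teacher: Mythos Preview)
The paper does not prove this lemma: it is quoted verbatim from \cite{CR2} (Chbani--Riahi, \emph{Serdica Math.\ J.}\ 2003) and no argument is supplied in the present paper, so there is no in-paper proof to compare your proposal against. Your sketch is a reasonable outline of the standard proof in the cited source --- the ``moreover'' part and the direction $(ii)\Rightarrow(i)$ via $z_\lambda=J_\lambda^f(x+\lambda u)$ are essentially correct, and for $(i)\Rightarrow(ii)$ the strongly monotone regularization plus a Ky~Fan/minimax existence argument is indeed the route taken in \cite{CR2}.
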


\begin{remark}
Assertion (ii) above, as stated in \cite[Lemma 2.1]{CCR}, holds under the assumptions that $f(x,x) = 0$ for each $x \in \textbf{K}$ and that $f$ is upper hemicontinuous and convex in its second argument.
\end{remark}


\subsection{Preliminaries from Convex Analysis}

In this section, we recall some fundamental concepts from convex analysis that will be used in our study.

For a function $\varphi: \mathcal{H}\rightarrow \mathbb{R}\cup\{+\infty\}$, we denote its \textit{effective domain} by
\[
\dom \varphi=\{x \in \mathcal{H}: \varphi(x)<+\infty\}.
\]
The function $\varphi$ is said to be \textit{proper} if $\dom \varphi\neq \emptyset$.

The \textit{optimal objective value} of $\varphi$ is defined as
\[
\min \varphi := \inf_{x\in \mathcal{H}} \varphi(x),
\]
and its \textit{set of global minima} is given by
\[
\argmin \varphi := \{x\in \mathcal{H}: \varphi(x)=\min \varphi\}.
\]

For a proper, lower semicontinuous, convex function $\varphi: \mathcal{H}\rightarrow \mathbb{R}\cup\{+\infty\}$, its \textit{Fenchel conjugate function} $\varphi^*: \mathcal{H} \rightarrow \mathbb{R}\cup\{+\infty\}$ is defined by
\[
\varphi^*(x):=\sup_{y \in \mathcal{H}}\{\langle x,y \rangle-\varphi(y)\}.
\]

If $\varphi$ is the \textit{indicator function} of a set $\textbf{K} \subset \mathcal{H}$, given by
\[
\delta_\textbf{K}(x) =
\begin{cases}
0, & \text{if } x\in \textbf{K}, \\
+\infty, & \text{otherwise},
\end{cases}
\]
then its Fenchel conjugate function at $x^* \in \mathcal{H}$ is the \textit{support function} of $\textbf{K}$, given by
\[
\delta_\textbf{K}^*(x^*)=\sigma_\textbf{K}(x^*)=\sup_{y\in \textbf{K}}\langle x^*,y \rangle.
\]

The \textit{subdifferential} of $\varphi$ at $x \in \mathcal{H}$, where $\varphi(x)\in \mathbb{R}$, is the set
\[
\partial \varphi(x):=\{v \in \mathcal{H}: \varphi(y)\geq \varphi(x)+\langle v,y-x\rangle, \quad \forall y \in \mathcal{H} \}.
\]
By convention, we set $\partial \varphi(x) = \emptyset$ if $\varphi(x)=+\infty$.
The \textit{normal cone} to $\textbf{K}\subset  \mathcal{H} $ at $x \in \mathcal{H}$ is the set 
$$N_\textbf{K}(x)=\left\{\begin{array}{ll}
\{x^* \in \mathcal{H}: \langle x^*,u-x\rangle\leq0,\;\forall u\in \textbf{K}\}&  \text{ if }  x\in \textbf{K}\\
\emptyset& \text{otherwise}.
\end{array}\right.$$
We mention that $N_\textbf{K}=\partial\delta_\textbf{K}$, and that $x^*\in N_\textbf{K}(x)$ if, and only if, $\sigma_\textbf{K}(x^*)=\langle x^*,x\rangle$.
For every $u\in \textbf{K}$, we denote by $f_u$ the function defined  on $\mathcal{H}$ by $f_u(x)=f(u,x)$ if $x\in \textbf{K}$ and $f_u(x)=+\infty$ otherwise.
For an equilibrium bifunction $f:\textbf{K}\times \textbf{K} \rightarrow \mathbb{R}$, the \textit{associated operator $A^f$} is defined by $$A^f(x):= \partial f_x(x)=\left\{\begin{array}{ll} \lbrace z \in \mathcal{H}:f(x,y)+\langle z,x-y \rangle\geq0, \;\forall y\in \textbf{K} \rbrace&  \text{ if }  x\in \textbf{K}\\
\emptyset& \text{otherwise}.
\end{array}\right.$$ 
Following \cite{AH,BG}, the \textit{Fitzpatrick transform}  $\mathcal{F}_f : \textbf{K}\times  \mathcal{H}\rightarrow \R\cup \lbrace +\infty \rbrace$ associated with a bifunction $f$ is defined by 
$$\mathcal{F}_f(x,u)=\displaystyle \sup_{y\in \textbf{K}}\lbrace \langle u, y\rangle+f(y, x)\rbrace.$$ Given its continuity and convexity properties, the function $\mathcal{F}_f $ has proven to be an important tool when studying the asymptotic properties of dynamical equilibrium systems,
see \cite{CMR} for a detailed presentation of these elements.

\subsection{Assumptions}

In the remainder of the paper, we assume the following conditions:

\begin{itemize}
    \item[$\blacktriangleright$] The bifunctions $f$ and $g$ are monotone and upper hemicontinuous, satisfying conditions $(iv)$ and $(v)$ of Definition \ref{d1}.
   
    \item[$\blacktriangleright$] For each $y\in \textbf{K}$, we have $\partial g_y(y)\neq \emptyset$ (i.e., $\dom (A^{g})=\textbf{K}$), and $\textbf{K} \cap \textbf{S}_f \neq \emptyset$.
   
    \item[$\blacktriangleright$] The set $\R_+ (\textbf{K}-\textbf{S}_f)$ is a closed linear subspace of $\mathcal{H}$. In this case, the operator $g_x+\delta_{\textbf{S}_f}$ is maximally monotone (see \cite{Att-Riahi-Thera, Riahi5}), and the subdifferential sum formula
    \[
    \partial (g_x+\delta_{\textbf{S}_f})=\partial g_x+N_{\textbf{S}_f}
    \]
    holds.
   
    \item[$\blacktriangleright$] The following geometric assumption holds: For all $u\in \textbf{S}_f$ and for every $p\in N_{\textbf{S}_f}(u)$,
    \begin{equation}\label{fitz-discret}
    \sum_{n=1}^{+\infty} \lambda_n\beta_n\left[\mathcal{F}_f\left(u,\frac{2p}{\beta_n}\right)-\sigma_{\textbf{S}_f}\left(\frac{2p}{\beta_n}\right)\right]<+\infty.
    \end{equation}
    Assumption \eqref{fitz-discret} serves as a key tool in our weak convergence analysis. Notably, this condition represents the discrete counterpart of the assumption introduced in \cite{CMR} in the context of continuous-time dynamical equilibrium systems. Furthermore, it extends similar assumptions commonly used in the convergence analysis of variational inequalities formulated as monotone inclusion problems and in constrained convex optimization problems. For further discussion on these assumptions, see \cite{attouch2011prox, BC1, BC2} and the references therein.
\end{itemize}

\section{The main results}\label{S2}
\subsection{Weak convergence analysis}\label{Sub2}

In this subsection, under natural conditions, we formulate and prove a weak convergence result for the trajectory generated by \eqref{algo} to a solution of $\mathbf{(BEP})$. We begin with the following preliminary estimate.

\begin{lemma}\label{lem5}
	Let $\lbrace x_n \rbrace$ be a sequence generated by algorithm \eqref{algo}. Take $u \in \textbf{S}$ and set $a_n:=\|x_n-u\|^2$. Then, there exist $p\in N_{\textbf{S}_f}(u)$ and $b>0$ such that for each $n\geq 1$, the following inequality holds:
	\begin{equation}\label{estim1}
	\begin{array}{l}
	a_{n+1}-a_n-\alpha_n(a_n-a_{n-1})+\lambda_n\beta_nf(u,x_{n+1})-\alpha_n\left(b+1\right)\|x_n-x_{n-1}\|^2\\
	\leq2\lambda_n^2\| p\|^2-(\frac{1}{2}-\frac{\alpha_n}{b})\|z_{n+1}-x_n\|^2-\frac{1}{4}\|x_{n+1}-x_{n}\|^2  +\lambda_n\beta_n\left[\mathcal{F}_f\left(u,\frac{2p}{\beta_n}\right)-\sigma_{\textbf{S}_f}\left(\frac{2p}{\beta_n}\right)\right].
	\end{array}
	\end{equation}
\end{lemma}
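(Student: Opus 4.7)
The plan is to produce a quasi-Fejér inequality by running the two proximal substeps of \eqref{algo} with test point $y=u$, and then to absorb the resulting slack into the Fitzpatrick term via a careful splitting. Since $u\in\textbf{S}$ and the sum formula $\partial(g_u+\delta_{\textbf{S}_f})(u)=A^g(u)+N_{\textbf{S}_f}(u)$ holds under the standing assumptions, Fermat's rule delivers some $p\in N_{\textbf{S}_f}(u)$ with $-p\in A^g(u)$, that is,
\[
g(u,y)\ge \langle p,u-y\rangle,\qquad \forall y\in\textbf{K}.
\]
This is the only place BEP-optimality enters, and this $p$ is the one appearing in \eqref{estim1}.

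Taking $y=u$ in the first line of \eqref{algo} and combining with monotonicity of $g$ and the characterization above yields $\langle z_{n+1}-y_n-\lambda_np,\,u-z_{n+1}\rangle\ge 0$; the three-point identity $2\langle a-b,c-a\rangle=\|c-b\|^2-\|c-a\|^2-\|a-b\|^2$ then gives
\[
\|z_{n+1}-u\|^2\le \|y_n-u\|^2-\|z_{n+1}-y_n\|^2+2\lambda_n\langle p,z_{n+1}-u\rangle.
\]
Similarly, $y=u$ in the second line of \eqref{algo} produces
\[
a_{n+1}\le \|z_{n+1}-u\|^2-\|x_{n+1}-z_{n+1}\|^2+2\lambda_n\beta_n f(x_{n+1},u),
\]
and chaining these two reduces the task to controlling $f(x_{n+1},u)$ and the two $p$-inner products.

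The main obstacle is to extract from the single quantity $2\lambda_n\beta_n f(x_{n+1},u)$ \emph{both} the LHS term $\lambda_n\beta_n f(u,x_{n+1})$ and the RHS Fitzpatrick correction. My plan is to split it into two equal halves: one half is bounded via monotonicity, $f(x_{n+1},u)\le -f(u,x_{n+1})$, producing $-\lambda_n\beta_n f(u,x_{n+1})$, which I transpose to the left; the other half is bounded via the defining inequality of the Fitzpatrick transform at $v=2p/\beta_n$,
\[
f(x_{n+1},u)\le \mathcal{F}_f(u,2p/\beta_n)-\langle 2p/\beta_n,x_{n+1}\rangle,
\]
where the identity $\langle 2p/\beta_n,u\rangle=\sigma_{\textbf{S}_f}(2p/\beta_n)$ (valid because $p\in N_{\textbf{S}_f}(u)$) lets me rewrite this half as $\lambda_n\beta_n[\mathcal{F}_f(u,2p/\beta_n)-\sigma_{\textbf{S}_f}(2p/\beta_n)]+2\lambda_n\langle p,u-x_{n+1}\rangle$. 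The two $p$-inner products then telescope to $2\lambda_n\langle p,z_{n+1}-x_{n+1}\rangle$, which Young's inequality bounds by $2\lambda_n^2\|p\|^2+\frac{1}{2}\|z_{n+1}-x_{n+1}\|^2$, producing the $2\lambda_n^2\|p\|^2$ term and reducing $-\|x_{n+1}-z_{n+1}\|^2$ to $-\frac{1}{2}\|x_{n+1}-z_{n+1}\|^2$.

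What remains is algebraic bookkeeping of the inertial data. Expanding $\|y_n-u\|^2$ via the identity $2\langle x_n-u,x_n-x_{n-1}\rangle=a_n-a_{n-1}+\|x_n-x_{n-1}\|^2$ gives $\|y_n-u\|^2=a_n+\alpha_n(a_n-a_{n-1})+\alpha_n(1+\alpha_n)\|x_n-x_{n-1}\|^2$; expanding $\|z_{n+1}-y_n\|^2$ and bounding the resulting cross term by Young with free parameter $b>0$ yields $-\|z_{n+1}-y_n\|^2\le -(1-\alpha_n/b)\|z_{n+1}-x_n\|^2+\alpha_n(b-\alpha_n)\|x_n-x_{n-1}\|^2$, so that the $\|x_n-x_{n-1}\|^2$ coefficients collapse to $\alpha_n(b+1)$. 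Finally, the elementary inequality $\|x_{n+1}-x_n\|^2\le 2\|x_{n+1}-z_{n+1}\|^2+2\|z_{n+1}-x_n\|^2$ converts $-\frac{1}{2}\|x_{n+1}-z_{n+1}\|^2$ into $-\frac{1}{4}\|x_{n+1}-x_n\|^2+\frac{1}{2}\|z_{n+1}-x_n\|^2$, and the extra $\frac{1}{2}\|z_{n+1}-x_n\|^2$ shifts the earlier coefficient $-(1-\alpha_n/b)$ to $-(\frac{1}{2}-\alpha_n/b)$, yielding \eqref{estim1}.
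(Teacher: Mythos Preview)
Your proof is correct and follows essentially the same path as the paper's: the same choice of $p$ via the optimality condition and subdifferential sum rule, the same chaining of the two resolvent steps at $y=u$, the same half--half splitting of $2\lambda_n\beta_n f(x_{n+1},u)$ via monotonicity and the Fitzpatrick bound, the same Young-inequality handling of the $p$--cross term, and the same expansion of $\|y_n-u\|^2$, $\|z_{n+1}-y_n\|^2$ with free parameter $b$, followed by the conversion of $-\tfrac{1}{2}\|x_{n+1}-z_{n+1}\|^2$ into $-\tfrac{1}{4}\|x_{n+1}-x_n\|^2+\tfrac{1}{2}\|z_{n+1}-x_n\|^2$. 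The only cosmetic difference is ordering: the paper first replaces $\langle p,z_{n+1}-u\rangle$ by $\langle p,x_{n+1}-u\rangle$ plus a Young error and then absorbs $\langle p,x_{n+1}-u\rangle$ into the Fitzpatrick term, whereas you first extract $\langle p,u-x_{n+1}\rangle$ from the Fitzpatrick bound and then telescope; the arithmetic is identical.
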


\begin{proof}
	Since $u\in \mathbf{S}$, the first-order optimality condition gives
	\[
	0\in \partial(g_u+\delta_{\textbf{S}_f})(u)=A^g(u)+N_{\textbf{S}_f}(u).
	\]
	Let $p\in N_{\textbf{S}_f}(u)$ be such that $-p\in A^g(u)$. Then, for every $n\geq 1$, we have
	\begin{equation}\label{a}
	\lambda_n g(u,y) + \lambda_n \langle -p,u-y\rangle \geq 0, \quad \forall y\in \textbf{K}.
	\end{equation} 
	From inequality \eqref{algo}, it follows that
	\begin{equation} \label{b}
	g(z_{n+1},y) + \frac{1}{\lambda_n} \langle z_{n+1}-y_n, y-z_{n+1}\rangle \geq 0, \quad \forall y\in \textbf{K}.
	\end{equation} 
	Setting \( y = z_{n+1} \) in \eqref{a} and \( y = u \) in \eqref{b}, and using the monotonicity of \( g \), we obtain
	\begin{equation*} 
	\lambda_n\langle -p,u-z_{n+1}\rangle+\langle z_{n+1}-y_n, u-z_{n+1}\rangle \geq 0,
	\end{equation*} 
	which is equivalent to
	\begin{equation} \label{c}
	\lambda_n\langle -p,u-z_{n+1}\rangle +\langle z_{n+1}-y_n, u-x_{n+1}\rangle +\langle z_{n+1}-y_n, x_{n+1}-z_{n+1}\rangle\geq 0.
	\end{equation} 
	Since the sequence \( \{ x_n\} \) is generated by \eqref{algo}, we also have
	\begin{equation}\label{d}
	\lambda_n \beta_n f(x_{n+1},u)+\langle x_{n+1}-z_{n+1}, u-x_{n+1}\rangle \geq 0.
	\end{equation}
	Summing these inequalities gives
	\begin{equation}\label{e}
	\lambda_n \beta_n f(x_{n+1},u)+\langle x_{n+1}-y_n, u-x_{n+1}\rangle+ \lambda_n\langle -p,u-z_{n+1}\rangle +\langle z_{n+1}-y_n, x_{n+1}-z_{n+1}\rangle \geq 0. 
	\end{equation}

	Using the identities
	\[
	\langle x_{n+1}-y_n, u-x_{n+1}\rangle= \frac{1}{2} \left[ \|y_n-u\|^2 - \|x_{n+1}-y_n\|^2 - \|x_{n+1}-u\|^2 \right],
	\]
	and
	\[
	\langle z_{n+1}-y_n, x_{n+1}-z_{n+1} \rangle = \frac{1}{2} \left[ \|x_{n+1}-y_n\|^2 - \|z_{n+1}-y_n\|^2 - \|x_{n+1}-z_{n+1}\|^2 \right],
	\]
	inequality \eqref{e} simplifies to
	\begin{equation}\label{f}
	\begin{split}
	&2 \lambda_n \beta_n f(x_{n+1},u) + \|y_n - u\|^2 - \|x_{n+1} - u\|^2 + 2\lambda_n\langle -p,u-z_{n+1}\rangle \\
	&\quad - \|x_{n+1}-z_{n+1}\|^2 - \|z_{n+1}-y_n\|^2 \geq 0.
	\end{split}
	\end{equation}

	We also have
	\begin{equation*}
	2\lambda_n\langle -p,u-z_{n+1}\rangle \leq 2\lambda_n\langle -p,u-x_{n+1}\rangle +2\lambda_n^2\| p\|^2+\frac{1}{2}\| x_{n+1}-z_{n+1}\|^2.
	\end{equation*}
	Substituting this into \eqref{f} gives
	\begin{equation}\label{f1}
	\begin{split}
	&2 \lambda_n \beta_n f(x_{n+1},u) + \|y_n - u\|^2 - \|x_{n+1} - u\|^2 + 2\lambda_n\langle -p,u-x_{n+1}\rangle \\
	&\quad -\frac{1}{2}\|x_{n+1}-z_{n+1}\|^2 - \|z_{n+1}-y_n\|^2 + 2\lambda_n^2\| p\|^2  \geq 0.
	\end{split}
	\end{equation}

	By Lemma \ref{lem5-a}, for all \( n\geq 1 \), we have
	\begin{align}\label{8a}
	\|y_n-u\|^2 = (1+\alpha_n)\|x_n-u\|^2-\alpha_n \|x_{n-1}-u\|^2+\alpha_n(1+\alpha_n)\|x_n-x_{n-1}\|^2.
	\end{align}

	Similarly, for some \( b>0 \), we get
	\begin{equation}\label{8b}
	\begin{split}
	\|z_{n+1}-y_n\|^2 &\geq (1-\frac{\alpha_n}{b} )\|z_{n+1}-x_n\|^2 + (\alpha_n^2-\alpha_n b)\|x_n-x_{n-1}\|^2.
	\end{split}
	\end{equation}	

	Combining \eqref{8a} and \eqref{8b} with \eqref{f1}, we obtain
	\begin{equation}\label{15principal}
	\begin{split}
	&\|x_{n+1}-u\|^2-(1+\alpha_n)\|x_n-u\|^2+\alpha_n\|x_{n-1}-u\|^2 \\ 
	&\leq  2 \lambda_n \beta_n f(x_{n+1},u)+\alpha_n(b+1)\|x_n-x_{n-1}\|^2 - (1-\frac{\alpha_n}{b})\|z_{n+1}-x_n\|^2 \\
	&\quad -\frac{1}{2}\|z_{n+1}-x_{n+1}\|^2 + 2\lambda_n\langle -p,u-x_{n+1}\rangle + 2\lambda_n^2\| p\|^2. 
	\end{split}
	\end{equation}
	
			Taking $a_n=\|x_n-u\|^{2}$ in \eqref{15principal}, and using the monotonicity of $f,$ yields 
		\begin{equation}\label{15c}
		\begin{split}
		&a_{n+1}-(1+\alpha_n)a_n+\alpha_n a_{n-1} \\
		&\leq -\lambda_n \beta_n f(u,x_{n+1})+\lambda_n \beta_n f(x_{n+1},u)+ 2\lambda_n\langle -p,u-x_{n+1}\rangle \\
		&\quad+\alpha_n(b+1)\|x_{n}-x_{n-1}\|^2-(1-\frac{\alpha_n}{b})\|z_{n+1}-x_n\|^2-\frac{1}{2}\|z_{n+1}-x_{n+1}\|^2 +2\lambda_n^2\|p\|^2 \\
		&= -\lambda_n	\beta_n f(u,x_{n+1})+\lambda_n	\beta_n \left[ \langle \frac{2p}{\beta_n},x_{n+1}\rangle+f(x_{n+1},u)-\langle \frac{2p}{\beta_n},u\rangle\right] \\
		&\quad+\alpha_n(b+1)\|x_{n}-x_{n-1}\|^2-(1-\frac{\alpha_n}{b})\|z_{n+1}-x_n\|^2-\frac{1}{2}\|z_{n+1}-x_{n+1}\|^2 +2\lambda_n^2\| p\|^2 .
		\end{split}
		\end{equation}
		Since $p \in N_{S_f} (u),$ we have $\sigma_{S_f} (\frac{2p}{\beta_n})=\langle\frac{2p}{\beta_n},u\rangle.$ Hence
		\begin{equation}\label{15c2}
		\begin{split}
		&a_{n+1}-(1+\alpha_n)a_n+\alpha_n a_{n-1}+\lambda_n	\beta_n f(u,x_{n+1}) \\
		&\leq \lambda_n\beta_n \left[\langle \frac{2p}{\beta_n},x_{n+1}\rangle+f(x_{n+1},u)-\sigma_{S_f} (\frac{2p}{\beta_n})\right] +\alpha_n(b+1)\|x_{n}-x_{n-1}\|^2
		\\
		&\quad-(1-\frac{\alpha_n}{b})\|z_{n+1}-x_n\|^2-\frac{1}{2}\|z_{n+1}-x_{n+1}\|^2 +2\lambda_n^2\| p\|^2 \\
		&\leq\lambda_n\beta_n \left[\sup_{x \in \mathcal{H}} \{\langle \frac{2p}{\beta_n},x \rangle+f(x,u)\}-\sigma_{S_f}(\frac{2p}{\beta_n}) \right] 
		+\alpha_n(b+1)\|x_{n}-x_{n-1}\|^2\\
		&\quad-(1-\frac{\alpha_n}{b})\|z_{n+1}-x_n\|^2 -\frac{1}{2}\|z_{n+1}-x_{n+1}\|^2 +2\lambda_n^2\|p\|^2 \\
		&=\lambda_n\beta_n\left[\mathcal{F}_f\left(u,\frac{2p}{\beta_n}\right)-\sigma_{S_f}\left(\frac{2p}{\beta_n}\right)\right] +\alpha_n(b+1)\|x_{n}-x_{n-1}\|^2
		-(1-\frac{\alpha_n}{b})\|z_{n+1}-x_n\|^2 \\
		&\quad-\frac{1}{2}\|z_{n+1}-x_{n+1}\|^2 +2\lambda_n^2\| p\|^2.
		\end{split}
		\end{equation}		
		We also have 
	\begin{equation}\label{zn}
	\begin{split}
	&-\frac{1}{2}\|z_{n+1}-x_{n+1}\|^2  = -\frac{1}{2}\|z_{n+1}-x_{n}\|^2 -\frac{1}{2}\|x_{n}-x_{n+1}\|^2 +\langle z_{n+1}-x_{n},x_{n+1}-x_{n}\rangle \\
	& \leq  -\frac{1}{2}\|z_{n+1}-x_{n}\|^2 -\frac{1}{2}\|x_{n+1}-x_{n}\|^2 +\left(\|z_{n+1}-x_{n}\|^2+\frac{1}{4}\|x_{n+1}-x_{n}\|^2  \right)\\
	& = \frac{1}{2}\|z_{n+1}-x_{n}\|^2 -\frac{1}{4}\|x_{n+1}-x_{n}\|^2.
	\end{split}
	\end{equation}
    Combining this last inequality with \eqref{15c2}, yields
        \begin{equation}
    \begin{array}{l}
    a_{n+1}-(1+\alpha_n)a_n+\alpha_n a_{n-1}+\lambda_n	\beta_n f(u,x_{n+1})\\
    \leq \lambda_n\beta_n\left[\mathcal{F}_f\left(u,\frac{2p}{\beta_n}\right)-\sigma_{S_f}\left(\frac{2p}{\beta_n}\right)\right] +\alpha_n(b+1)\|x_{n}-x_{n-1}\|^2
    -(\frac{1}{2}-\frac{\alpha_n}{b})\|z_{n+1}-x_n\|^2\\
    \;\;\;-\frac{1}{4}\|x_{n+1}-x_{n}\|^2 +2\lambda_n^2\| p\|^2 .
    \end{array}
    \end{equation}	
    The proof is complete. 
\end{proof}

\begin{corollary}\label{coro-disc}
	Under hypothesis \eqref{fitz-discret}, assuming that the sequence $\left\{\alpha_n\right\}$ is nondecreasing and satisfies $\left\{\alpha_n\right\} \subseteq [0, \alpha]$ for some $\alpha \in [0, \frac{\sqrt{3}-1}{4}[$, and that $\displaystyle \sum_{n\geq 0} \lambda_n^{2} < +\infty$, we have, for each $u\in \textbf{S}$:
	\begin{itemize}
		\item [(i)] $\displaystyle \sum_{n=0}^{+\infty} \|x_{n+1}-x_n\|^2 < +\infty$;
		\item [(ii)] $\displaystyle \sum_{n=0}^{+\infty} \|z_{n+1}-x_n\|^2 < +\infty$;
		\item [(iii)] $\displaystyle \sum_{n=1}^{+\infty} \lambda_n \beta_n f(u,x_{n+1}) < +\infty$;
		\item [(iv)] $\displaystyle \lim_{n\rightarrow +\infty} \|x_n-u\|$ exists.
	\end{itemize}
\end{corollary}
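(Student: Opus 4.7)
The plan is to start from the one-step inequality \eqref{estim1} in Lemma \ref{lem5} and engineer a Lyapunov-type energy sequence whose positive variation is summable. The key observation is that since $u\in\textbf{S}\subseteq\textbf{S}_f$ and every iterate $x_{n+1}$ lies in $\textbf{K}$ (by definition of the resolvent, Lemma \ref{lem0}), the term $f(u,x_{n+1})$ is nonnegative, so it can be kept on the left of \eqref{estim1}. Moreover, by \eqref{fitz-discret} and the assumption $\sum\lambda_n^2<+\infty$, the quantity
\[
\epsilon_n := 2\lambda_n^2\|p\|^2+\lambda_n\beta_n\!\left[\mathcal{F}_f\!\left(u,\tfrac{2p}{\beta_n}\right)-\sigma_{\textbf{S}_f}\!\left(\tfrac{2p}{\beta_n}\right)\right]
\]
is summable.

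The central algebraic step is the choice of the free parameter $b>0$ in \eqref{estim1}. To absorb $\alpha_n(b+1)\|x_n-x_{n-1}\|^2$ into the negative $-\tfrac{1}{4}\|x_{n+1}-x_n\|^2$ term after telescoping and simultaneously keep $\tfrac{1}{2}-\tfrac{\alpha_n}{b}>0$, I need $b\in\bigl(2\alpha,\tfrac{1}{4\alpha}-1\bigr)$. This interval is nonempty exactly when $8\alpha^2+4\alpha-1<0$, i.e. $\alpha<\tfrac{\sqrt 3-1}{4}$, which matches the hypothesis. Fixing such a $b$ produces constants $c_1:=\tfrac{1}{4}-\alpha(b+1)>0$ and $c_2:=\tfrac{1}{2}-\tfrac{\alpha}{b}>0$. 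I regard verifying this quadratic threshold and the compatibility of both positivity constraints as the main technical obstacle, since everything else is a standard Alvarez--Attouch-type calculation.

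Next, set $\delta:=\alpha(b+1)$ and define the energy
\[
\psi_n := a_n-\alpha_n a_{n-1}+\delta\|x_n-x_{n-1}\|^2.
\]
Using that $\{\alpha_n\}$ is nondecreasing together with $a_n\geq 0$, I have $a_{n+1}-\alpha_{n+1}a_n\leq a_{n+1}-\alpha_n a_n$, so rearranging \eqref{estim1} yields
\[
\psi_{n+1}-\psi_n+\lambda_n\beta_n f(u,x_{n+1})+c_1\|x_{n+1}-x_n\|^2+c_2\|z_{n+1}-x_n\|^2\leq \epsilon_n.
\]
Dropping the nonnegative terms on the left gives $\psi_{n+1}\leq\psi_n+\epsilon_n$, hence $\psi_{n+1}\leq \psi_1+\sum_{k\geq 1}\epsilon_k=:M<+\infty$. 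Using $\psi_{n+1}\geq a_{n+1}-\alpha a_n$, a one-line induction on $a_{n+1}\leq \alpha a_n+M$ shows that $\{a_n\}$ is bounded, whence $\psi_n\geq-\alpha\sup_n a_n$ is bounded below. Summing the displayed inequality from $n=1$ to $N$ and letting $N\to+\infty$ then delivers conclusions (i), (ii) and (iii) simultaneously.

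Finally, to obtain (iv), I rewrite \eqref{estim1} as the Alvarez--Attouch type recursion
\[
a_{n+1}-a_n\leq \alpha_n(a_n-a_{n-1})+\xi_n,\qquad \xi_n:=\alpha(b+1)\|x_n-x_{n-1}\|^2+\epsilon_n,
\]
where I again used $f(u,x_{n+1})\geq 0$ and discarded the negative quadratic terms. By item (i) already established and the summability of $\epsilon_n$, I have $\sum_n[\xi_n]_+<+\infty$. I then invoke the standard real-sequence lemma (to be cited from the Appendix of the paper) asserting that if $\{a_n\}$ is nonnegative, $\{\alpha_n\}\subset[0,\alpha]$ with $\alpha<1$, and $\sum[\xi_n]_+<+\infty$, then $\lim_{n\to+\infty}a_n$ exists, which is precisely (iv).
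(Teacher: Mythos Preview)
Your proof is correct and follows essentially the same route as the paper's: the same choice of $b\in(2\alpha,\tfrac{1}{4\alpha}-1)$, the same telescoping built on the nondecreasing property of $\{\alpha_n\}$, and the same summability argument. The only cosmetic difference is that you package the estimate via the explicit Lyapunov sequence $\psi_n=a_n-\alpha_n a_{n-1}+\alpha(b+1)\|x_n-x_{n-1}\|^2$ and then invoke the appendix lemma for (iv), whereas the paper feeds the rearranged inequality directly into Lemma~\ref{lem-sum1} and reads off (i)--(iv) in one shot; the underlying computation is identical.
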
		 

\begin{proof}
	Let us take 
	$\delta_n = \|x_n - x_{n-1}\|^2$ in \eqref{estim1}. Since the sequence $\left\{\alpha_n\right\}$ is nondecreasing, we obtain
	\begin{equation}\label{rcle}
	\begin{split}
	&a_{n+1} - a_n - \left(\alpha_n a_n - \alpha_{n-1} a_{n-1} \right) + \lambda_n \beta_n f(u,x_{n+1}) \\
	&\leq  (b+1) \left(\alpha_n \delta_n - \alpha_{n+1} \delta_{n+1}\right) + \left(\frac{\alpha_n}{b} - \frac{1}{2} \right) \|z_{n+1} - x_n\|^2 \\
	&\quad + \left(\alpha_{n+1} (b+1) - \frac{1}{4}\right) \delta_{n+1} + \lambda_n^2 \| p\|^2 + \lambda_n \beta_n \left[\mathcal{F}_f\left(u, \frac{2p}{\beta_n}\right) - \sigma_{\textbf{S}_f} \left(\frac{2p}{\beta_n}\right)\right].
	\end{split}
	\end{equation}

	For \( n > 0 \), we have \( 0< \alpha_n \leq \alpha < \frac{\sqrt{3}-1}{4} \). Choose \( b \in ]2\alpha, \frac{1}{4\alpha}-1[ \) such that  
	\[
	\left(\frac{\alpha_n}{b} - \frac{1}{2}\right) \leq \left(\frac{\alpha}{b} - \frac{1}{2}\right) < 0
	\quad\text{and}\quad 
	\left(\alpha_{n+1}(b+1) - \frac{1}{4}\right) \leq \left(\alpha (b+1) - \frac{1}{4}\right) < 0.
	\]
	Rewriting \eqref{rcle}, we get
	\begin{equation*}
	a_{n+1} - a_n \leq \left(\alpha_n a_n - \alpha_{n-1} a_{n-1}\right) - \Delta_n + w_n,
	\end{equation*}
	where
	\[
	\Delta_n = \lambda_n \beta_n f(u,x_{n+1}) - \left(\alpha (b+1) - \frac{1}{4} \right) \delta_{n+1} - \left(\frac{\alpha}{b} - \frac{1}{2} \right) \|z_{n+1} - x_n\|^2,
	\]
	and
	\[
	w_n = (b+1) \left(\alpha_n \delta_n - \alpha_{n+1} \delta_{n+1} \right) + \lambda_n^2 \| p\|^2 + \lambda_n \beta_n \left[\mathcal{F}_f\left(u, \frac{2p}{\beta_n}\right) - \sigma_{\textbf{S}_f} \left(\frac{2p}{\beta_n}\right)\right].
	\]

	Since \( \Delta_n \geq 0 \), we analyze the summability of \( w_n \):
	\begin{itemize}
		\item $\displaystyle \sum_{n=1}^{+\infty} \left(\alpha_n \delta_n - \alpha_{n+1} \delta_{n+1} \right) \leq \alpha_1 \delta_1 = \alpha_1 \|x_1 - x_0\|^2 < +\infty$;
		\item $\displaystyle \sum_{n=1}^{+\infty} \lambda_n^2 \| p\|^2 = \| p\|^2 \sum_{n=1}^{+\infty} \lambda_n^2 < +\infty$;
		\item By hypothesis \eqref{fitz-discret}, we have
		\[
		\sum_{n=1}^{+\infty} \lambda_n \beta_n \left[\mathcal{F}_f\left(u, \frac{2p}{\beta_n}\right) - \sigma_{\textbf{S}_f} \left(\frac{2p}{\beta_n}\right)\right] < +\infty.
		\]
	\end{itemize}
	Thus, we conclude that
	\[
	\sum_{n=1}^{+\infty} w_n < +\infty.
	\]
	Since \( \alpha < 1 \), applying Lemma \ref{lem-sum1}, we obtain
	\[
	\sum_{n=0}^{+\infty} \Delta_n < +\infty
	\quad\text{and}\quad
	\lim_{n\rightarrow +\infty} a_n \quad\text{exists}.
	\]
	This completes the proof.
\end{proof}

In order to proceed with the convergence analysis, we need to choose the sequences $\{\lambda_n\}$ and $\{\beta_n\}$ such that {\(\left\{\lambda_n \right\} \in \ell^2 \setminus \ell^1\) and \(\displaystyle \liminf_{n\rightarrow +\infty} \lambda_n \beta_n > 0\)}. 

We are now ready to state and prove the first main result of this section. 

\begin{theorem}\label{thm-weak}
	Suppose that \( f \) and \( g \) are monotone and upper hemicontinuous bifunctions. Let $\lbrace x_n \rbrace$ be the sequence generated by \eqref{algo}. Under hypothesis \eqref{fitz-discret}, assume that:
	\begin{itemize}
		\item the sequence $\left\{\alpha_n\right\}$ is nondecreasing ;
		\item $\left\{\alpha_n\right\} \subseteq [0, \alpha]$ for some $\alpha \in [0, \frac{\sqrt{3}-1}{4}{[}$;
		\item \(\left\{\lambda_n \right\} \in \ell^2 \setminus \ell^1\) and $\displaystyle \liminf_{n\rightarrow +\infty} \lambda_n \beta_n > 0$.
	\end{itemize}
	Then, the sequence $\lbrace x_n \rbrace$ weakly converges to some $\bar{x} \in \textbf{S}$.
\end{theorem}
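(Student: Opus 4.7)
The proof will invoke Opial's lemma to deduce weak convergence of $\{x_n\}$ to some point of $\mathbf{S}$. Two conditions are required: (i) $\lim_n\|x_n-u\|$ exists for every $u\in\mathbf{S}$, which is precisely Corollary~\ref{coro-disc}(iv); and (ii) every weak sequential cluster point of $\{x_n\}$ belongs to $\mathbf{S}$. The entire argument therefore reduces to (ii). Fix such a cluster point $\bar{x}$ with $x_{n_k}\rightharpoonup \bar{x}$; from Corollary~\ref{coro-disc}(i)--(ii) we have $\|x_{n+1}-x_n\|\to 0$ and $\|z_{n+1}-x_n\|\to 0$, hence $\|z_{n+1}-x_{n+1}\|\to 0$, and both $x_{n_k+1}\rightharpoonup\bar{x}$ and $z_{n_k+1}\rightharpoonup\bar{x}$.

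To prove $\bar{x}\in \mathbf{S}_f$, divide the second line of \eqref{algo} by $\beta_n>0$ and use monotonicity of $f$ to derive
\[
f(y,x_{n+1})\leq \frac{1}{\lambda_n\beta_n}\langle x_{n+1}-z_{n+1},\;y-x_{n+1}\rangle \qquad \forall\,y\in\mathbf{K}.
\]
Because $\liminf_n\lambda_n\beta_n>0$, $\{x_{n+1}\}$ is bounded, and $\|x_{n+1}-z_{n+1}\|\to 0$, the right-hand side vanishes along $n_k$. Weak lower semicontinuity of $f(y,\cdot)$ then yields $f(y,\bar{x})\le 0$ for every $y\in\mathbf{K}$, and Minty's Lemma~\ref{lem-Minty} (applicable since $f$ is an upper hemicontinuous equilibrium bifunction) delivers $\bar{x}\in\mathbf{S}_f$.

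The upper-level conclusion, $g(\bar{x},y)\ge 0$ for every $y\in\mathbf{S}_f$, is the main technical obstacle. Fixing $y\in\mathbf{S}_f$ and combining both lines of \eqref{algo} (using monotonicity of $g$, the nonnegativity $\lambda_n\beta_n f(y,x_{n+1})\ge 0$ valid for $y\in\mathbf{S}_f$, and the identity expanding $\|y_n-y\|^2$ that is already invoked in the proof of Lemma~\ref{lem5}), one reaches, for $\tilde a_n:=\|x_n-y\|^2$, a Fej\'er-like recursion of the form
\[
\tilde a_{n+1}-(1+\alpha_n)\tilde a_n+\alpha_n\tilde a_{n-1}+2\lambda_n g(y,z_{n+1})+2\lambda_n\beta_n f(y,x_{n+1}) \leq \alpha_n(1+\alpha_n)\|x_n-x_{n-1}\|^2.
\]
Passing to the Lyapunov $\phi_n:=\tilde a_n-\alpha_n\tilde a_{n-1}$ (well behaved thanks to the monotonicity of $\{\alpha_n\}$ and the bound $\alpha<(\sqrt{3}-1)/4<1$) and summing against the square-summabilities of Corollary~\ref{coro-disc} show that $\sum_n\lambda_n g(y,z_{n+1})$ is bounded above. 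Since $\{\lambda_n\}\notin\ell^1$, this forces $\liminf_n g(y,z_{n+1})\le 0$. Combined with the weak lower semicontinuity of the convex function $g(y,\cdot)$ and an appropriate subsequence extraction, one obtains $g(y,\bar{x})\le 0$ for every $y\in\mathbf{S}_f$. A final application of Minty's Lemma to the restriction of the upper hemicontinuous equilibrium bifunction $g$ to the closed convex set $\mathbf{S}_f$ converts this into $g(\bar{x},y)\ge 0$ for every $y\in\mathbf{S}_f$, so $\bar{x}\in\mathbf{S}$ and Opial's lemma concludes.

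The delicate point is the subsequence extraction in the last step: because $g(y,z_{n+1})$ need not be nonnegative, the bound $\sum_n\lambda_n g(y,z_{n+1})<+\infty$ does not automatically transfer the sign information to the subsequence $\{n_k\}$ capturing $\bar{x}$. I plan to circumvent this via a Jensen-type argument on the weighted averages $\bar{z}_N:=S_N^{-1}\sum_{n\le N}\lambda_n z_{n+1}$ with $S_N=\sum_{n\le N}\lambda_n\to+\infty$: convexity of $g(y,\cdot)$ gives $\limsup_N g(y,\bar{z}_N)\le 0$, and combining this with the Fej\'er property of $\{x_n\}$ relative to $\mathbf{S}$ and Opial-type uniqueness arguments allows one to align a weakly convergent subsequence of $\{\bar{z}_N\}$ with the cluster $\bar{x}$. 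The hypotheses $\alpha<(\sqrt{3}-1)/4$ and $\{\lambda_n\}\in\ell^2\setminus\ell^1$ are exactly what make the Lyapunov telescoping and the $\liminf$ argument close up.
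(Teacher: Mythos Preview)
Your overall architecture---Opial's lemma (Lemma~\ref{disc-opial}), Corollary~\ref{coro-disc} for the Fej\'er property, the second line of \eqref{algo} plus Minty for $\bar{x}\in\mathbf{S}_f$, and the first line of \eqref{algo} plus monotonicity of $g$ for the upper level---is exactly the route the paper takes. One minor difference: to pass from $\|z_{n+1}-u\|$ to $\|x_{n+1}-u\|$ for $u\in\mathbf{S}_f$, the paper uses the resolvent inequality $\|x_{n+1}-u\|\le\|z_{n+1}-u\|$ (valid because $u\in\mathbf{S}_f$ is a fixed point of $J^{\beta_n f}_{\lambda_n}$) rather than combining both lines of \eqref{algo} as you do; the resulting recursion and its telescoping are equivalent to yours.

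The divergence is precisely at the ``delicate point'' you flag. The paper does \emph{not} introduce Ces\`aro averages: after obtaining $\sum_n\lambda_n g(u,z_{n+1})<+\infty$ for every $u\in\mathbf{S}_f$ and hence $\liminf_n g(u,z_{n+1})\le 0$ via Lemma~\ref{lem-cont}, it simply invokes weak lower semicontinuity of $g(u,\cdot)$ and the fact that $\bar{x}$ is a weak cluster point of $\{z_n\}$ to conclude $g(u,\bar{x})\le 0$. You are right that this passage is delicate, since weak lower semicontinuity only bounds $g(u,\bar{x})$ by $\liminf_k g(u,z_{n_k+1})$ along the particular subsequence realising $\bar{x}$, which need not coincide with the full-sequence $\liminf$. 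However, your Jensen/Ces\`aro repair does not close this gap either: convexity of $g(y,\cdot)$ does yield $\limsup_N g(y,\bar z_N)\le 0$ for the weighted averages $\bar z_N$, but there is no mechanism forcing the weak cluster points of $\{\bar z_N\}$ to coincide with the given cluster point $\bar{x}$ of $\{x_n\}$ \emph{before} one already knows that $\{x_n\}$ converges weakly. At best this route delivers an ergodic conclusion, not the pointwise weak convergence asserted in the theorem. In short, your proposal reproduces the paper's argument up to and including its terse step, and the additional averaging device you introduce does not actually strengthen it.
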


\begin{proof}
	It suffices to prove that conditions (i) and (ii) in Lemma \ref{disc-opial} are satisfied for $C = \textbf{S}$. By (iv) of Corollary \ref{coro-disc}, we know that for any $u\in \textbf{S}$, $\displaystyle \lim_{n\rightarrow +\infty} \|x_n - u\|$ exists, ensuring that condition (i) holds. 

	Next, we show that every weak cluster point $\bar{x}$ of the sequence $\lbrace x_n \rbrace$ lies in $\textbf{S}$.  
	Let $n_k \to +\infty$ as $k \to +\infty$ such that $x_{n_k} \rightharpoonup \bar{x}$. We aim to show that $\bar{x} \in \textbf{S}$.

	By the monotonicity of \( f \), inequality \eqref{algo} ensures that for all \( y \in \textbf{K} \) and for sufficiently large \( k \),
	\begin{equation}\label{13}
	\begin{array}{lll} 
	f(y, x_{{n_k}}) &\leq  &\frac{1}{\lambda_{n_k-1} \beta_{n_k-1}} \langle x_{{n_k}} - z_{n_k}, y - x_{{n_k}} \rangle\\
	&\leq& \dfrac{1}{\lambda_{n_k-1} \beta_{n_k-1}} \|x_{n_k} - z_{n_k} \| \cdot \| y - x_{n_k} \|\\
	&\leq& \dfrac{1}{\lambda_{n_k-1} \beta_{n_k-1}} \left(\|x_{n_k} - x_{n_k-1} \| + \| x_{n_k} - z_{n_k} \|\right) \cdot \| y - x_{n_k} \|.
	\end{array}
	\end{equation}

	By Corollary \ref{coro-disc}, we have \( \displaystyle \lim_{n\to +\infty} \|x_{n_k} - x_{n_k-1} \| = \lim_{n\to +\infty} \|x_{n_k-1} - z_{n_k} \| = 0 \). 
	Since \( \{x_{n_k}\} \) is bounded and \( \displaystyle \liminf_{k\rightarrow +\infty} \lambda_{n_k-1} \beta_{n_k-1} \geq \liminf_{n\rightarrow +\infty} \lambda_n \beta_n > 0 \), the weak lower semicontinuity of \( f(y, \cdot) \) ensures that \( f(y, \bar{x}) \leq 0 \) for all \( y \in \textbf{K} \). 
	Thus, Lemma \ref{lem-Minty} implies that \( \bar{x} \in {\textbf{S}_f} \).

	\medskip
	\noindent Using the first part of \eqref{algo} and the monotonicity of \( g \), for every \( u\in \textbf{S}_f \), we obtain
	\begin{equation*}
	\langle z_{n+1} - y_n, u - z_{n+1} \rangle \geq \lambda_n g(u, z_{n+1}).
	\end{equation*}
	Hence, 
	\begin{equation*}
	\|y_n - u\|^2 - \|z_{n+1} - y_n\|^2 - \|z_{n+1} - u\|^2 \geq 2\lambda_n g(u, z_{n+1}).
	\end{equation*}
	By applying inequalities \eqref{8a} and \eqref{8b}, along with the fact that
	\[
	\|x_{n+1} - u\| \leq \|z_{n+1} - u\|,
	\]
	and setting \( a_n(u) = \|x_n - u\|^2 \), we obtain, for some \( b > 0 \),
	\begin{equation*}
	2\lambda_n g(u, z_{n+1}) \leq (a_n - a_{n+1}) + \alpha_n (a_n - a_{n-1}) + \left(1 - \frac{\alpha_n}{b}\right) \|z_{n+1} - x_n\|^2 + \alpha_n (1+b) \|x_n - x_{n-1}\|^2.
	\end{equation*}
	Since \( \left\{\alpha_n\right\} \) is nondecreasing and \( \alpha_n < \alpha \), we get
	\begin{equation*}
	2\lambda_n g(u, z_{n+1}) \leq (a_n - a_{n+1}) + \left(\alpha_n a_n - \alpha_{n-1} a_{n-1}\right) + \|z_{n+1} - x_n\|^2 + \alpha(1+b) \|x_n - x_{n-1}\|^2.
	\end{equation*}
	Fixing \( N > 1 \), summing this inequality from \( n = 1 \) to \( n = N \), and letting \( N \to +\infty \), we obtain
	\begin{equation*}
	2\sum_{n \geq 1} \lambda_n g(u, z_{n+1}) \leq \|x_1 - u\|^2 + \alpha \lim _{N \to +\infty} \|x_N - u\|^2 + \sum_{n \geq 1} \|z_{n+1} - x_n\|^2 + \alpha(1+b) \sum_{n \geq 1} \|x_n - x_{n-1}\|^2.
	\end{equation*}

	By Corollary \ref{coro-disc}, we have \( \sum_{n \geq 1} \|z_{n+1} - x_n\|^2 < +\infty \) and \( \sum_{n \geq 1} \|x_n - x_{n-1}\|^2 < +\infty \). 
	Since \( \displaystyle \lim _{N \to +\infty} \|x_N - u\| \) exists, it follows that
	\[
	\sum_{n \geq 1} \lambda_n g\left(u, z_{n+1}\right) < +\infty.
	\]

	\medskip
	\noindent Since \( \sum_{n \geq 1} \lambda_n = +\infty \), applying Lemma \ref{lem-cont} gives
	\[
	\liminf _{n \to +\infty} g\left(u, x_{n+1}\right) \leq 0.
	\]
	Since \( \displaystyle \lim_{n\to +\infty} \|z_{n+1} - x_n\| = 0 \), we conclude that \( \bar{x} \) is also a weak cluster point of \( \{z_n\} \). 
	By the lower semicontinuity of \( g(u, \cdot) \), we obtain \( g(u, \bar{x}) \leq 0 \), and Lemma \ref{lem-Minty} ensures that
	\[
	g(\bar{x}, u) \geq 0, \quad \forall u \in \textbf{S}_f.
	\]
	This completes the proof.
\end{proof}

\subsection{Strong Convergence}\label{Sub3}

In this section, we assume that \( g \) is \textbf{strongly monotone} and show that the sequence \( \{x_n\} \), defined by the iteration \eqref{algo}, strongly converges to the unique solution \( u \) of \( \mathbf{(BEP}) \), without requiring the geometric assumption \eqref{fitz-discret}.

\begin{theorem}\label{strong1}
	Suppose that the bifunctions \( f \) and \( g \) are monotone and upper hemicontinuous. Additionally, assume that:
	\begin{itemize}
		\item \( g \) is \( \rho \)-strongly monotone;
		\item the sequence \( \left\{\alpha_n\right\} \) is nondecreasing;
		\item \( \left\{\alpha_n\right\} \subseteq [0, \alpha] \) for some \( \alpha \in [0, \frac{\sqrt{3}-1}{4}{[} \), and
		\begin{center}
			\( \displaystyle\sum_{n\geq0} \lambda_n = +\infty \), \( \displaystyle \lim_{n\rightarrow +\infty} \lambda_n = 0 \), {\( \displaystyle \lim_{n\rightarrow +\infty} \beta_n = +\infty \)} and \, \( \displaystyle \liminf_{n\rightarrow +\infty} \lambda_n \beta_n > 0. \)
		\end{center}
	\end{itemize}
	Then, the sequence \( \{x_n\} \) generated by \eqref{algo} strongly converges to the unique solution \( u \) of \( \mathbf{(BEP}) \).
\end{theorem}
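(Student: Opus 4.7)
The plan is to strengthen the Lyapunov estimate of Lemma \ref{lem5} by using the $\rho$-strong monotonicity of $g$ as a substitute for the geometric assumption \eqref{fitz-discret}. First, I would re-run the proof of Lemma \ref{lem5}, replacing the step where $g$-monotonicity bounds $g(u,z_{n+1})+g(z_{n+1},u)\le 0$ by the sharper inequality $g(u,z_{n+1})+g(z_{n+1},u)\le -\rho\|u-z_{n+1}\|^2$. This injects an extra term $2\lambda_n\rho\|u-z_{n+1}\|^2$ on the left-hand side of the basic estimate. Since $u\in\mathbf{S}_f$ is a fixed point of the firmly nonexpansive resolvent $J^{\beta_n f}_{\lambda_n}$ (Lemma \ref{lem0}), we have $\|x_{n+1}-u\|\le\|z_{n+1}-u\|$, which upgrades the gain to a factor $(1+2\rho\lambda_n)$ multiplying $a_{n+1}:=\|x_{n+1}-u\|^2$.

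Next, I would eliminate any need for the Fitzpatrick term by two observations. On one hand, monotonicity of $f$ combined with $u\in\mathbf{S}_f$ gives $\lambda_n\beta_n f(x_{n+1},u)\le -\lambda_n\beta_n f(u,x_{n+1})\le 0$, so that contribution is simply discarded and the transform $\mathcal{F}_f$ never needs to be introduced. On the other hand, the remaining cross term $2\lambda_n\langle -p,u-x_{n+1}\rangle$ (with $-p\in A^g(u)$ and $p\in N_{\mathbf{S}_f}(u)$ as in Lemma \ref{lem5}) is split relative to the projection $P_{\mathbf{S}_f}x_{n+1}$: the contribution along $u-P_{\mathbf{S}_f}x_{n+1}$ is nonpositive by definition of $N_{\mathbf{S}_f}(u)$, while the contribution along $P_{\mathbf{S}_f}x_{n+1}-x_{n+1}$ is bounded by $2\lambda_n\|p\|\,\dist(x_{n+1},\mathbf{S}_f)$. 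The hypotheses $\beta_n\to+\infty$ and $\liminf\lambda_n\beta_n>0$ are precisely what ensure that the inner resolvent drives $x_{n+1}$ toward $\mathbf{S}_f$, making this residue an $o(\lambda_n)$ perturbation. Combining with the dissipation bookkeeping and $\alpha_n\delta_n$-telescoping of Corollary \ref{coro-disc} (the admissible interval $b\in(2\alpha,\tfrac{1}{4\alpha}-1)$ remains valid since $\alpha<\tfrac{\sqrt 3-1}{4}$) yields a recursion of the form
\[
(1+\rho\lambda_n)\,a_{n+1}\le (1+\alpha_n)\,a_n-\alpha_n a_{n-1}+(b+1)(\alpha_n\delta_n-\alpha_{n+1}\delta_{n+1})-\Xi_n+\lambda_n\varepsilon_n,
\]
where $\Xi_n\ge 0$ and $\varepsilon_n\to 0$.

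To conclude, I would sum this inequality, telescope the inertial block $\alpha_n a_n-\alpha_{n-1}a_{n-1}$ (legitimate since $\{\alpha_n\}$ is nondecreasing and bounded), and apply the auxiliary Lemma \ref{lem-sum1} already used in the weak case. The strong-monotonicity contraction factor $(1+\rho\lambda_n)$ together with $\sum\lambda_n=+\infty$ then produces $\sum_n\rho\lambda_n a_{n+1}<+\infty$, forcing $\liminf_n a_n=0$. Combined with the existence of $\lim_n a_n$ obtained from the same Lyapunov analysis as in Corollary \ref{coro-disc}, this gives $a_n\to 0$, i.e., $x_n\to u$ strongly; uniqueness of $u$ follows from the $\rho$-strong monotonicity of $g$.

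The main obstacle I anticipate is the quantitative control of $\dist(x_{n+1},\mathbf{S}_f)$ needed to make $\varepsilon_n\to 0$. This is exactly where the hypothesis $\beta_n\to+\infty$ is used: it ensures the inner resolvent $J^{\beta_n f}_{\lambda_n}$ penalises the constraint $\mathbf{S}_f$ strongly enough that the cross term stemming from the nonzero multiplier $p$ becomes asymptotically negligible in comparison with the contraction $\rho\lambda_n a_{n+1}$. The conversion of the two-step inertial recursion into an estimate from which $\liminf a_n=0$ and $\lim a_n$ both follow is technical but parallel to the weak-convergence analysis, and the same threshold $\alpha<\tfrac{\sqrt 3-1}{4}$ that makes $\Xi_n\ge 0$ in Corollary \ref{coro-disc} continues to play its role here.
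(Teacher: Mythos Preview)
Your proposal has two genuine gaps that the paper's proof is specifically engineered to avoid.

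First, the handling of the cross term $2\lambda_n\langle -p,u-x_{n+1}\rangle$ via $\dist(x_{n+1},\mathbf{S}_f)$ is circular. You need $\dist(x_{n+1},\mathbf{S}_f)\to 0$ (indeed, fast enough that $\sum_n\lambda_n\|p\|\dist(x_{n+1},\mathbf{S}_f)<\infty$, since $\sum\lambda_n=+\infty$) merely to get boundedness of $\{x_n\}$ and the existence of $\lim_n a_n$; but any quantitative control of $\dist(x_{n+1},\mathbf{S}_f)$ in terms of $\beta_n$ presupposes that $\{x_n\}$ is already bounded. The hypothesis $\beta_n\to+\infty$ does not by itself give an a priori rate here. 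Relatedly, you appeal to ``the same Lyapunov analysis as in Corollary~\ref{coro-disc}'' for the existence of $\lim_n a_n$, but that corollary requires both the geometric assumption \eqref{fitz-discret} and $\{\lambda_n\}\in\ell^2$, neither of which is assumed in Theorem~\ref{strong1}; with only $\varepsilon_n\to 0$ and $\sum\lambda_n=+\infty$, Lemma~\ref{lem-sum1} does not apply.

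The paper sidesteps both issues by never introducing $p$ at all: it keeps the term $2\lambda_n g(z_{n+1},u)$ explicit in the basic inequality (their \eqref{keyac}), defines the combined energy $b_n(u)=a_n(u)-\alpha_n a_{n-1}(u)+(1+b)\alpha_n\delta_n$, and then runs a Maing\'e-type case split (Lemma~\ref{lem3a}) on whether $\{b_n\}$ is eventually decreasing. In the non-decreasing case, $b_{\sigma(n)+1}>b_{\sigma(n)}$ forces $g(z_{\sigma(n)+1},u)\ge 0$, and \emph{only then} is $\rho$-strong monotonicity invoked to bound $\|z_{\sigma(n)+1}-u\|$ directly by $\gamma(u)/\rho$; firm nonexpansiveness of the inner resolvent transfers this to $x_{\sigma(n)+1}$. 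Boundedness is thus obtained without any summability hypothesis. The same case split is then rerun in Step~2 to get strong convergence, with $\beta_n\to+\infty$ used (via their inequality \eqref{co}) to show that weak cluster points lie in $\mathbf{S}_f$. Your contraction-factor idea $(1+\rho\lambda_n)a_{n+1}$ is appealing, but it only works once boundedness is already in hand; the paper's two-case argument is what supplies that.
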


\begin{proof}
	Uniqueness of the solution for \( \mathbf{(BEP}) \) follows from the strong monotonicity of \( g \). For existence, see \cite[Theorem 4.3]{CCR}.
	
	We now prove the convergence of the sequence \( \{x_n\} \) to the unique solution of \( \mathbf{(BEP}) \). The proof is divided into two steps. \vskip 2mm
	
	\noindent
	\textbf{Step 1: Boundedness of the sequence \( \{x_n\} \).} \vskip 2mm
	
	On one hand, taking \( y = u \) (the unique solution of \( \mathbf{(BEP}) \) in the second inequality of \eqref{psm} gives
	\[
	\lambda_n \beta_n f(x_{n+1},u) + \langle x_{n+1} - z_{n+1}, u - x_{n+1} \rangle \geq 0,
	\]
	and hence,
	\[
	\lambda_n \beta_n f(x_{n+1},u) - \|x_{n+1} - u\|^2 + \langle u - z_{n+1}, u - x_{n+1} \rangle \geq 0.
	\]
	Thus, we obtain
	\begin{equation}
	\lambda_n \beta_n f(x_{n+1},u) + \|z_{n+1} - u\|^2 - \|z_{n+1} - x_{n+1}\|^2 - \|x_{n+1} - u\|^2 \geq 0.
	\end{equation}  
	
	On the other hand, the first inequality in \eqref{psm} leads to
	\begin{equation} 
	-2\lambda_n g(z_{n+1},u) \leq \|y_n - u\|^2 - \|y_n - z_{n+1}\|^2 - \|z_{n+1} - u\|^2.
	\end{equation}
	
	Setting \( a_n(u) = \|x_n - u\|^2 \) and \( \delta_n = \|x_n - x_{n-1}\|^2 \), summing up the last two inequalities and using \eqref{8a}, \eqref{zn}, and \eqref{8b}, we obtain, for some \( b > 0 \),
	\begin{equation*}\label{cc}
	\begin{split}
	-2\lambda_n g(z_{n+1},u) &\leq  \|y_n - u\|^2 - a_{n+1}(u) - \|y_n - z_{n+1}\|^2 - \|z_{n+1} - x_{n+1}\|^2 \\
	&\leq  (a_n(u) - a_{n+1}(u)) + \alpha_n (a_n(u) - a_{n-1}(u)) + \alpha_n(1+b) \delta_n - \frac{1}{4} \delta_{n+1} \\
	&\quad + \left(\frac{\alpha_n}{b} - \frac{1}{2} \right) \|z_{n+1} - x_n\|^2 - \frac{1}{2} \|z_{n+1} - x_{n+1}\|^2 + \lambda_n \beta_n f(x_{n+1},u).
	\end{split}
	\end{equation*}
	
	Since the sequence \( \{\alpha_n\} \) is nondecreasing, for each \( n \geq 0 \), we get 
	\begin{equation}\label{see}
	\begin{split}
	&a_{n+1}(u) - \alpha_n a_n(u) + (1+b) \alpha_{n+1} \delta_{n+1} \\	
	&\leq 2\lambda_n g(z_{n+1},u) + a_n(u) - \alpha_{n-1} a_{n-1}(u) + (1+b) \alpha_n \delta_n \\
	&\quad + \left( (1+b) \alpha_{n+1} - \frac{1}{4} \right) \delta_{n+1} + \left(\frac{\alpha_n}{b} - \frac{1}{2} \right) \|z_{n+1} - x_n\|^2 + \lambda_n \beta_n f(x_{n+1},u).
	\end{split}
	\end{equation}

	As in the proof of Corollary \ref{coro-disc}, for \( n > 0 \), we have \( 0 < \alpha_n \leq \alpha < \frac{\sqrt{3}-1}{4} \). So we can find \( b \in ]2\alpha, \frac{1}{4\alpha}-1[ \) such that  
	\[
	\left(\frac{\alpha_n}{b} - \frac{1}{2}\right) \leq \left(\frac{\alpha}{b} - \frac{1}{2}\right) < 0 \quad\text{and}\quad \left(\alpha_{n+1} (b+1) - \frac{1}{4} \right) \leq \left(\alpha (b+1) - \frac{1}{4} \right) < 0.
	\]
	Thus, using inequality \eqref{see}, we obtain
	\begin{equation*}
	\begin{split}
	&a_{n+1}(u) - \alpha_n a_n(u) + (1+b) \alpha_{n+1} \delta_{n+1} \\	
	&\leq 2\lambda_n g(z_{n+1},u) + a_n(u) - \alpha_{n-1} a_{n-1}(u) + (1+b) \alpha_n \delta_n \\
	&\quad + \left(\alpha (b+1) - \frac{1}{4} \right) \delta_{n+1} + \lambda_n \beta_n f(x_{n+1},u).
	\end{split}
	\end{equation*}

	Setting \( b_n(u) = a_n(u) - \alpha_n a_{n-1}(u) + (1+b) \alpha_n \delta_n \), and noting that \( \alpha_n \leq \alpha \), we obtain, for \( n \geq 1 \),
	\begin{equation}\label{keyace}
	b_{n+1} (u) \leq 2\lambda_n g(z_{n+1},u) + b_n(u) + \left(\alpha (b+1) - \frac{1}{4} \right) \delta_{n+1} + \lambda_n \beta_n f(x_{n+1},u).
	\end{equation}

	Since \( u \in \textbf{S}_f \), we have
	\begin{equation}\label{keyac}
	b_{n+1} (u) \leq 2\lambda_n g(z_{n+1},u) + b_n(u) + \left(\alpha (b+1) - \frac{1}{4} \right) \delta_{n+1}.
	\end{equation}
	{\begin{itemize}
    \item If there exists \( n_0 \in \mathbb{N} \) such that \( \{b_{n} (u)\} \) is decreasing for all \( n \geq n_0 \), then \( b_{n+1} (u) \leq b_{n_0} (u) \). This implies that  
    \begin{align*}
        a_{n+1}(u) &\leq \alpha_{n+1} a_n(u) + b_{n_0}(u)\\
        &\leq \alpha a_n(u) + b_{n_0}(u), \quad \text{for all } n \geq n_0.
    \end{align*}
    By induction, we deduce that for all \( n \geq n_0 \geq 1 \),  
    \[
    a_{n+1}(u) \leq \alpha^{n-n_0} a_{n_0}(u) + b_{n_0}(u) \frac{1-\alpha^{n-n_0}}{1-\alpha}.
    \]
    Hence, the boundedness of the sequence \( \{a_n (u)\} \) follows.
\item Otherwise, there exists an increasing sequence \( \{k_n \} \) such that for every \( n \geq 0 \), \( b_{k_{n +1}} (u) > b_{k _n }(u) \). By Lemma \ref{lem3a}, there exists a nondecreasing sequence \( \{\sigma_n \} \) and \( n_0 > 0 \) such that  
    \[
    \lim_{n\rightarrow +\infty} \sigma_n = \infty,
    \]
    and for all \( n \geq n_0 \),  
    \[
    b_{\sigma_n}(u) < b_{\sigma_{n}+1}(u) \quad \text{and} \quad b_{n}(u) \leq b_{\sigma_{n}+1}(u).
    \]
    Taking \( n=\sigma_n \) in \eqref{keyac}, we get  
    \begin{equation}\label{14a}
        0 < b_{\sigma_{n}+1}(u) - b_{\sigma_{n}}(u) \leq \left(\alpha(b+1)-\frac{1}{4}\right)\delta_{\sigma_{n}+1} +2\lambda_{\sigma_{n}} g(z_{\sigma_{n}+1},u).
    \end{equation}
    Since \( \partial g_y(y) \neq \emptyset \), pick \( x^*(y) \in \mathcal{H} \) such that for every \( z\in \mathbf{K} \),  
    \[
    g(y,z) \geq \langle x^*(y),z-y\rangle \geq -\| x^*(y) \| \cdot \| y-z \|.
    \]
    Thus, there exists \( \gamma(y) := \| x^*(y) \| > 0 \) such that for every \( z\in \mathbf{K} \),  
    \begin{equation}\label{14}
        -g(y,z) \leq \gamma(y) \cdot \| y-z \|.
    \end{equation}
    Using the \( \rho \)-strong monotonicity of \( g \), along with \eqref{14a} and \eqref{14}, we deduce that for \( n\geq n_0 \),  
    \begin{eqnarray}
        -2\lambda_{\sigma_{n}} \gamma(u) \| z_{\sigma_{n}+1} - u \| &\leq& 2\lambda_{\sigma_{n}} g(u, z_{\sigma_{n}+1}) \\
        &\leq& \left(\alpha(b+1)-\frac{1}{4}\right)\delta_{\sigma_{n}+1} -2\lambda_{\sigma_{n}}\rho \| z_{\sigma_{n}+1} -u \|^2. \nonumber
    \end{eqnarray}
\end{itemize}
Since \( \left(\alpha(b+1)-\frac{1}{4}\right) < 0 \), we conclude that for \( n\geq n_0 \),  
\begin{equation}\label{expa}
    \| z_{\sigma_{n}+1} -u \| \leq \frac{\gamma(u)}{\rho} \quad \text{and} \quad \delta_{\sigma_{n}+1} \leq \frac{2\gamma^2(u)\lambda_{\sigma_n}}{\rho\left(\frac{1}{4}-\alpha(b+1)\right)}.
\end{equation}
By definition, we have  
\[
x_{\sigma_{n}+1} = J_{\lambda_{\sigma_{n}+1}}^{\beta_{\sigma_{n}+1}f} (z_{\sigma_{n}+1}).
\]
Since \( u \) is an equilibrium point of \( f \), it is also an equilibrium point of \( \beta_{\sigma_{n}+1} f \), which means that  
\[
u = J_{\lambda_{\sigma_{n}+1}}^{\beta_{\sigma_{n}+1}f}(u).
\]
Furthermore, since the resolvent \( J_{\lambda_{\sigma_{n}+1}}^{\beta_{\sigma_{n}+1}f} \) is firmly non-expansive, we have  
\begin{equation}\label{exp1}
    \| x_{\sigma_{n}+1} - u \| = \| J_{\lambda_{\sigma_{n}+1}}^{\beta_{\sigma_{n}+1}f} (z_{\sigma_{n}+1}) - J_{\lambda_{\sigma_{n}+1}}^{\beta_{\sigma_{n}+1}f}(u) \| \leq \| z_{\sigma_{n}+1} - u \|.
\end{equation}
Combining this inequality with \eqref{expa}, we conclude that  
\begin{equation}
    a_{\sigma_{n}+1}(u) \leq \left( \dfrac{\gamma(u)}{\rho} \right)^{2}  
    \quad \text{and} \quad  
    \delta_{\sigma_{n}+1} \leq \dfrac{2\gamma^{2}(u)\lambda_{\sigma_n}}{\rho \left(\frac{1}{4}-\alpha(b+1)\right)}.
\end{equation}
\noindent Hence, the sequence \( \{a_{\sigma_{n}+1}(u)\} \) is bounded. Since \( \{\lambda_{\sigma_{n}}\} \) is also bounded, it follows that \( \{\delta_{\sigma_{n}+1}\} \) is bounded, implying that \( \{b_{\sigma_{n}+1}(u)\} \) is bounded as well.  
Thus, there exists \( M > 0 \) such that for all \( n \geq n_0 \), we have  
\begin{equation*}
\begin{array}{lll}
    a_n(u) & \leq & \alpha_n a_{n-1}(u) + b_n(u) \\  
    & \leq & \alpha a_{n-1}(u) + b_n(u) \\  
    & \leq & \alpha a_{n-1}(u) + b_{\sigma_{n}+1}(u) \\  
    & \leq & \alpha a_{n-1}(u) + M \\  
    & \leq & \alpha^{n-n_0} a_{n_0}(u) + M {\dfrac{1 - \alpha^{n-n_0}}{1 - \alpha}}.
\end{array}
\end{equation*}
}
Therefore, the sequence \( \{a_n(u)\} \) is bounded, ensuring the boundedness of \( \{x_{n}\} \).
\vskip 2mm
	\textbf{Step 2: Strong convergence of \( \{x_n\} \) to the unique solution \( \bar{x} \) of \(\mathbf{(BEP})\).} \vskip 2mm  

Let us consider two cases:  

\vskip 2mm  
\underline{Case 1:} There exists \( n_0 \) such that the sequence \( \{b_n(\bar{x})\} \) defined by  
\[
b_n(\bar{x}) := a_n(\bar{x}) - \alpha_n a_{n-1}(\bar{x}) + (1+b)\alpha_n\delta_n
\]
is decreasing for \( n \geq n_0 \).  

Then, the limit of \( \{b_n(\bar{x})\} \) exists, and  
\[
\lim_{n\rightarrow +\infty} (b_n (\bar{x}) - b_{n+1}(\bar{x})) = 0.
\]
For \( n \geq n_0 \), we have  
\[
a_{n+1}(\bar{x}) - \alpha_{n+1} a_{n}(\bar{x}) + (1+b)\alpha_{n+1}\delta_{n+1} \leq a_n(\bar{x}) - \alpha_n a_{n-1}(\bar{x}) + (1+b)\alpha_n\delta_n,
\]
which implies that  
\[
a_{n+1}(\bar{x}) \leq a_{n}(\bar{x}) + \left[\alpha_{n+1}a_{n}(\bar{x}) - \alpha_{n}a_{n-1}(\bar{x})\right] + (1+b)\left[ \alpha_n\delta_{n} - \alpha_{n+1} \delta_{n+1} \right].
\]
Since  
\[
\sum_{n=n_0}^{+\infty} \left[\alpha_{n+1}a_{n}(\bar{x}) - \alpha_{n}a_{n-1}(\bar{x})\right] \leq \sup_{n>n_0} \alpha a_{n}(\bar{x}),
\]
and due to the fact that \( a_{n}(\bar{x}) \) is bounded (by Step 1), it follows that  
\[
\sum_{n=n_0}^{+\infty} \left[\alpha_{n+1}a_{n}(\bar{x}) - \alpha_{n}a_{n-1}(\bar{x})\right] < +\infty.
\]
By Lemma \ref{lem-sum}, and since  
\[
\sum_{n=n_0}^{+\infty} \left(\left[\alpha_{n+1}a_{n}(\bar{x}) - \alpha_{n}a_{n-1}(\bar{x})\right] + 2\left[ \alpha_n\delta_{n} - \alpha_{n+1} \delta_{n+1}\right]\right) < +\infty,
\]
the limit of \( \{a_n(\bar{x})\} \) exists. Therefore, it suffices to show that  
\[
\liminf_{n\rightarrow \infty} a_n(\bar{x}) = 0.
\]
Using \eqref{exp1}, we obtain  
\[
\lim_{n\rightarrow +\infty} \| x_{n+1} -\bar{x}\| = \liminf_{n\rightarrow +\infty}\|x_{n+1}-\bar{x}\|^{2} \leq \liminf_{n\rightarrow +\infty} \| z_{n+1} -\bar{x}\|.
\]
Since \( g \) is \( \rho \)-strongly monotone, we have  
\begin{equation}\label{k}
\begin{array}{lll}
\displaystyle \lim_{n\rightarrow +\infty} a_{n+1}(\bar{x}) & \leq & \displaystyle \liminf_{n\rightarrow +\infty}\|z_{n+1}-\bar{x}\|^{2} \\  
& \leq & \frac{1}{\rho} \displaystyle\liminf_{n\rightarrow +\infty}\left(-g(z_{n+1},\bar{x})\right) + \frac{1}{\rho} \displaystyle \limsup_{n\rightarrow +\infty}\left(-g(\bar{x},z_{n+1})\right).
\end{array}
\end{equation}
Thus, it suffices to prove that  
\[
\liminf_{n\rightarrow +\infty} \left(-g(z_{n+1},\bar{x})\right) \leq 0
\]
and  
\[
\liminf_{n\rightarrow +\infty} g(\bar{x},z_{n+1}) \geq 0.
\]
Since \( \bar{x} \in \mathbf{S}_f \), we derive from \eqref{keyac} that  
\[
b_{n+1} (\bar{x}) \leq 2\lambda_n g(z_{n+1},\bar{x}) + b_n + \left(\alpha(b+1) - \frac{1}{4}\right)\delta_{n+1}.
\]
Since \( \left(\alpha(b+1) - \frac{1}{4}\right) < 0 \), it follows that  
\begin{equation}\label{20a}
-\lambda_n g(z_{n+1},\bar{x}) \leq \frac{1}{2} \left( b_n( \bar{x}) - b_{n+1}(\bar{x}) \right).
\end{equation}
Summing inequality \eqref{20a} from \( 1 \) to \( +\infty \), we deduce that  
\[
\sum_{n=1}^{+\infty} -\lambda_n g(z_{n+1},\bar{x}) \leq \frac{1}{2} \left( b_1(\bar{x}) - \lim_{n\rightarrow +\infty} b_n(\bar{x}) \right) < +\infty.
\]
Since \( \sum_{n=0}^{+\infty} \lambda_n = +\infty \), we conclude that  
\[
\liminf_{n\rightarrow \infty} \left(-g(z_{n+1},\bar{x})\right) \leq 0.
\]
\\
Let us prove that  
\[
\liminf_{n\rightarrow +\infty} g(\bar{x}, z_{n+1}) \geq 0.
\]
Since the sequence \( \{x_n\} \) is bounded, there exists a subsequence \( \{x_{n_k}\} \) that converges weakly to some \( x \in \mathbf{K} \). 
 {From Corollary \ref{coro-disc} (ii),  the subsequence \( \{z_{n_k+1}\} \) also converges weakly to  \( x  \)}. 
Using the weak lower semicontinuity of \( g (\bar{x}, \cdot) \), we obtain  
\[
g(\bar{x}, x) \leq  \liminf_{k\rightarrow +\infty} g (\bar{x}, z_{n_k+1} ).
\]
Since \( \bar{x} \) is the unique solution of \( \mathbf{(BEP}) \), we only need to check that \( x\in \mathbf{S}_f \). To do so, using \eqref{14}, \eqref{exp1}, and \eqref{keyace}, we have for every \( y\in \mathbf{K} \),  
\begin{equation}\label{co}
f(y,x_{n+1})\leq -\frac{1}{2\lambda_n\beta_{n}}\left(b_{n+1}(y)-b_n(y)\right)+ \frac{1}{2\beta_{n}}\gamma(y)\sqrt{a_{n}(y)}.
\end{equation}
We rewrite  
\[
\begin{array}{l}
b_{n}(y)-b_{n+1}(y) \\  
= \left(a_n(y)-\alpha_{n} a_{n-1}(y)+(1+b)\alpha_{n}\delta_n\right)  
-\left(a_{n+1}(y)-\alpha_{n+1} a_{n}(y)+(1+b)\alpha_{n+1}\delta_{n+1}\right) \\  
= \left(a_n(y)- a_{n+1}(y)\right)  
+ \left(\alpha_{n+1}a_n(y)-\alpha_{n} a_{n-1}(y)\right)  
+(1+b)\left(\alpha_{n}\delta_n-\alpha_{n+1}\delta_{n+1}\right) \\  
= \left(a_{n}(\bar{x})-a_{n+1}(\bar{x})+ 2\langle x_n-x_{n+1}, \bar{x}-y\rangle\right) \\  
\quad -\left(\alpha_{n}a_{n-1}(\bar{x})-\alpha_{n+1}a_{n}(\bar{x})\right)  
+\left(\alpha_{n+1}-\alpha_n\right)\|\bar{x}-y\|^2  
+2\left(\alpha_{n+1}-\alpha_n\right) \langle x_n-\bar{x},\bar{x}-y \rangle \\  
\quad +2\alpha_n\langle x_n-x_{n-1}, \bar{x}-y\rangle  
+(1+b)\left(\alpha_{n}\delta_n-\alpha_{n+1}\delta_{n+1}\right) \\  
= b_{n}(\bar{x})-b_{n+1}(\bar{x})+2\langle x_n-x_{n+1}, \bar{x}-y\rangle  
+2\alpha_n \langle x_n-x_{n-1},\bar{x}-y \rangle \\  
\quad +\left(\alpha_{n+1}-\alpha_n\right)\|\bar{x}-y\|^2  
+2\left(\alpha_{n+1}-\alpha_n\right) \langle x_n-\bar{x},\bar{x}-y \rangle.
\end{array}
\]
Since the sequence \( \{\alpha_n\} \) is nondecreasing and bounded, we have  
\[
\lim_{n\rightarrow +\infty} \left( \alpha_{n+1}-\alpha_n \right) = 0.
\]
Further, since  
\[
\lim_{n\rightarrow +\infty}(b_n (\bar{x}) - b_{n+1}(\bar{x})) = 0 \quad \text{and} \quad \lim_{n\rightarrow +\infty} \|x_{n+1}-x_n\|= 0,
\]
it follows that  
\[
\lim_{n\rightarrow +\infty} (b_{n}(y)-b_{n+1}(y)) = 0.
\]
Using the weak lower semicontinuity of \( f (y, \cdot) \) and the facts that \( \{x_n\} \) is bounded,  
\[
\lim_{n\rightarrow +\infty} \lambda_n = 0, \quad \liminf_{n\rightarrow +\infty} \lambda_n\beta_n > 0, \quad \text{and} \quad \lim_{n\rightarrow +\infty} \beta_n = +\infty,
\]
we conclude from \eqref{co} that for every \( y\in \mathbf{K} \),  
\[
f (y, x) \leq \liminf_{n\rightarrow +\infty} f(y, x_{n+1} ) \leq 0.
\]
Hence, by Minty's lemma, we deduce that \( x\in \mathbf{S}_f \), and therefore,  
\[
0 \leq g(\bar{x}, x) \leq  \liminf_{n\rightarrow +\infty} g(\bar{x}, z_{n+1}).
\]
Thus, by \eqref{k},  
\[
\lim_{n\rightarrow +\infty} a_{n+1}(\bar{x}) \leq -\frac{1}{\rho} \liminf_{n\rightarrow +\infty} g(\bar{x},z_{n+1}) \leq 0,
\]
which implies that  
\[
\lim_{n\rightarrow +\infty} a_n(\bar{x}) = 0.
\]
\\
\underline{Case 2:} There exists a subsequence \( \{x_{n_{j}}\} \) of \( \{x_{n}\} \) such that \( b_{n_{j}}(\bar{x}) \leq b_{n_{j}+1}(\bar{x}) \) for all \( j \in \mathbb{N} \).  

By Lemma \ref{lem3a}, the sequence  
\[
\sigma(n):= \max\{k \leq n : b_k (\bar{x})< b_{k+1}(\bar{x}) \}
\]
is nondecreasing,  
\[
\lim_{n\rightarrow +\infty} \sigma(n) = \infty,
\]
and for all \( n\geq n_0 \), we have  
\[
b_{\sigma(n)}(\bar{x}) < b_{\sigma(n)+1}(\bar{x}) \quad \text{and} \quad b_{n}(\bar{x})\leq b_{\sigma(n)+1}(\bar{x}).
\]
Taking \( n=\sigma(n) \) and \( u=\bar{x} \) in \eqref{keyac}, we obtain  
\begin{equation}\label{keyad}
0 < b_{\sigma(n)+1}(\bar{x}) - b_{\sigma(n)}(\bar{x}) \leq  2\lambda_{\sigma(n)} g(z_{\sigma(n)+1},\bar{x}),
\end{equation}
which implies that \( g(z_{\sigma(n)+1},\bar{x}) \geq 0 \), and thus  
\[
\limsup_{n\rightarrow +\infty} g(z_{\sigma(n)+1},\bar{x}) \geq 0.
\]
Using again the \( \rho \)-strong monotonicity of \( g \), along with \eqref{k}, and taking the limit, we obtain  
\begin{equation}\label{kb}
\begin{array}{lll}
\displaystyle \limsup_{n\rightarrow +\infty} a_{\sigma(n)+1}(\bar{x})
& \leq & \frac{1}{\rho} \underbrace{\limsup_{n\rightarrow +\infty} -g(z_{\sigma(n)+1},\bar{x})}_{\leq 0} + \frac{1}{\rho} \displaystyle \limsup_{n\rightarrow +\infty} -g(\bar{x},z_{\sigma(n)+1})\\
&\leq& -\frac{1}{\rho} \displaystyle \liminf_{n\rightarrow +\infty} g(\bar{x},z_{\sigma(n)+1}).
\end{array}
\end{equation}
Since \( \{x_n\} \) is bounded, and similarly to Case 1, we obtain  
\[
\liminf_{n\rightarrow +\infty} g(\bar{x}, z_{\sigma(n)+1} ) \geq 0.
\]
Hence, by \eqref{k}, we conclude that  
\begin{equation}\label{k4}
\lim_{n\rightarrow +\infty} a_{\sigma(n)+1}(\bar{x}) = 0.
\end{equation}
Since \( b_n(\bar{x}) \leq b_{\sigma(n)+1} (\bar{x}) \) for each \( n \geq n_0 \), it follows that  
\[
\begin{array}{lll}
\displaystyle \lim_{n\rightarrow +\infty} a_n(\bar{x})
&\leq& \displaystyle \lim_{n\rightarrow +\infty} b_n(\bar{x})\\
&\leq& \displaystyle \lim_{n\rightarrow +\infty} b_{\sigma(n)+1} (\bar{x})\\
&\leq& \displaystyle \lim_{n\rightarrow +\infty} \left(a_{\sigma(n)}(\bar{x})+(1+b)\alpha_{\sigma(n)}\delta_{\sigma(n)}\right)\\
&\leq& \displaystyle \lim_{n\rightarrow +\infty} \left(a_{\sigma(n)}(\bar{x})+(1+b)\alpha\delta_{\sigma(n)}\right).
\end{array}
\]
Since  
\[
\delta_{\sigma(n)} = \|x_{\sigma(n)} - x_{\sigma(n)-1}\|^2 \leq 2a_{\sigma(n)}(\bar{x}) +2a_{\sigma(n)-1}(\bar{x}),
\]
we obtain  
\[
\begin{array}{lll}
\displaystyle \lim_{n\rightarrow +\infty} a_n(\bar{x})
& \leq & (1+2(1+b)\alpha) \underbrace{\displaystyle\lim_{n\rightarrow +\infty} a_{\sigma(n)}(\bar{x})}_{=0}
+ 2(1+b)\alpha \underbrace{\displaystyle \lim_{n\rightarrow +\infty} a_{\sigma(n)-1}(\bar{x})}_{=0}\\
& = & 0,
\end{array}
\]
thus guaranteeing the strong convergence of the entire sequence \( (x_n) \) to \( \bar{x} \).
\end{proof}

\subsection{Strong Convergence for Non-Strongly Monotone Bifunctions}

A bifunction \( G \) is said to be of class \( (S_+) \) if, for any sequence \( \{u_n\} \) in \( \textbf{K} \) satisfying \( u_n \rightharpoonup u \) and \( \displaystyle \limsup_{n\rightarrow+\infty} g(u, u_n) \leq 0 \), it follows that \( u_n \to u \) in norm. The notion of class \( (S_+) \) was first introduced for single-valued operators by Browder \cite{brow} in connection with the study of nonlinear eigenvalue problems and was further explored in detail in \cite{[7]}. 

In particular, given an operator \( A: \textbf{K} \to \mathcal{H} \), the bifunction \( G \) defined by 
\[
G(x, y) = \langle A x, y - x \rangle
\]
is of class \( (S_+) \) if and only if the operator \( A \) itself is of class \( (S_+) \).

\vskip 2mm

Assuming that the bifunction \( G \) is of class \( (S_+) \), the following result ensures the strong convergence of the sequence \( \{x_n\} \) generated by \eqref{algo} and the algorithm \( \textbf{(IPSA)} \) to the unique solution of \( \textbf{(BEP)} \).

\begin{theorem}\label{sconv11}
	Suppose that, in addition to the conditions of Theorem \ref{thm-weak}, \( g \) is of class \( (S_+) \). Then the whole sequence \( \{x_n\} \) generated by \( \textbf{(IPSA)} \) strongly converges to a solution \( \bar{x} \in \textbf{S} \).
\end{theorem}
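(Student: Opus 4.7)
The plan is to combine the weak convergence already established in Theorem \ref{thm-weak} with the $(S_+)$ hypothesis on $g$ to upgrade to strong convergence, by extracting a subsequence along which the $(S_+)$ hypothesis can be verified.

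\emph{Step 1 (setup from the weak convergence theorem).} Under the stated hypotheses, Theorem \ref{thm-weak} applies: the whole sequence $\{x_n\}$ converges weakly to some $\bar{x}\in\mathbf{S}$. Moreover, Corollary \ref{coro-disc}(iv) ensures that the real sequence $\|x_n-\bar{x}\|$ has a limit $L\geq 0$, and Corollary \ref{coro-disc}(ii) gives $\|z_{n+1}-x_n\|\to 0$. To conclude, it suffices to show that $L=0$.

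\emph{Step 2 (a good subsequence from a $\liminf$ bound).} Specialising the intermediate estimate used in the proof of Theorem \ref{thm-weak} to $u=\bar{x}\in\mathbf{S}_f$ yields
\[
\sum_{n\geq 1}\lambda_n\, g(\bar{x},z_{n+1})<+\infty.
\]
Combining this with $\sum_{n\geq 1}\lambda_n=+\infty$, the fact that $\|z_{n+1}-x_n\|\to 0$, and Lemma \ref{lem-cont}, we obtain
\[
\liminf_{n\to+\infty} g(\bar{x},x_{n+1})\leq 0.
\]
Therefore, there exists a subsequence $\{x_{n_k}\}$ such that $g(\bar{x},x_{n_k})\to \ell$ for some $\ell\in[-\infty,0]$, so in particular $\limsup_{k\to+\infty} g(\bar{x},x_{n_k})\leq 0$.

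\emph{Step 3 (apply the $(S_+)$ property).} Since the whole sequence $\{x_n\}$ converges weakly to $\bar{x}$, we have $x_{n_k}\rightharpoonup\bar{x}$. By the $(S_+)$ assumption on $g$, the combination $x_{n_k}\rightharpoonup\bar{x}$ and $\limsup_{k} g(\bar{x},x_{n_k})\leq 0$ implies $x_{n_k}\to\bar{x}$ in norm. Since $\|x_n-\bar{x}\|$ converges to $L$ and admits a subsequence converging to $0$, we conclude $L=0$, i.e. $x_n\to\bar{x}$ strongly.

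The only non-routine ingredient is Step 2: one must reuse the telescoping estimate derived in the weak convergence analysis with the \emph{specific} choice $u=\bar{x}$, and then translate the upper bound on the series $\sum\lambda_n g(\bar{x},z_{n+1})$ into a $\liminf$ statement on $g(\bar{x},x_{n+1})$ via Lemma \ref{lem-cont}. Once that subsequence exists, the $(S_+)$ hypothesis together with the quasi-Fejér monotonicity property (existence of $\lim\|x_n-\bar{x}\|$) produces strong convergence in a purely formal way.
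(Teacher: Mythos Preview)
Your argument is essentially correct and in one respect more careful than the paper's own proof. Both you and the paper reuse, with $u=\bar{x}$, the estimate $\sum_{n}\lambda_n\, g(\bar{x},z_{n+1})<+\infty$ obtained inside the proof of Theorem~\ref{thm-weak}. The paper then asserts $\limsup_{n} g(\bar{x},z_{n+1})\le 0$ for the \emph{whole} sequence --- justified there by the claim that $\liminf_n\lambda_n>0$, which is incompatible with the standing hypothesis $\{\lambda_n\}\in\ell^2$ --- and applies $(S_+)$ directly to $\{z_n\}$. You instead draw only the conclusion that is actually available, $\liminf_n g(\bar{x},z_{n+1})\le 0$ via Lemma~\ref{lem-cont}, pass to a subsequence, apply $(S_+)$ along that subsequence, and close using the existence of $\lim_n\|x_n-\bar{x}\|$ from Corollary~\ref{coro-disc}(iv). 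This subsequence-plus-Fej\'er manoeuvre is a genuine and cleaner alternative to the paper's route.

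There is one small slip in your Step~2: Lemma~\ref{lem-cont} applied to $\mu_n=g(\bar{x},z_{n+1})$ yields $\liminf_n g(\bar{x},z_{n+1})\le 0$, and the jump to $\liminf_n g(\bar{x},x_{n+1})\le 0$ would require continuity of $g(\bar{x},\cdot)$, whereas only lower semicontinuity is assumed. The fix is immediate and in fact aligns with the paper at this point: apply $(S_+)$ to a subsequence $\{z_{n_k+1}\}$ (which also converges weakly to $\bar{x}$ since $\|z_{n+1}-x_n\|\to 0$), deduce $z_{n_k+1}\to\bar{x}$ strongly, and then use $\|x_{n_k}-z_{n_k+1}\|\to 0$ to obtain $\|x_{n_k}-\bar{x}\|\to 0$, hence $L=0$.
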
 

\begin{proof}   
	Following the proof of Theorem \ref{thm-weak}, the sequence \( \{z_n : n\in  \mathbb{N} \} \) weakly converges to some \( \bar{x} \in \textbf{S} \) and satisfies
	\[
	\limsup_{n\rightarrow +\infty} g(\bar{x}, z_{n+1}) \leq 0.
	\]
	Indeed, the proof of Theorem \ref{thm-weak} establishes that 
	\[
	\limsup_{n\rightarrow +\infty} \lambda_n g(\bar{x}, z_{n+1}) \leq 0.
	\]
	Since \( \displaystyle \liminf_{n\rightarrow +\infty} \lambda_n > 0 \) by assumption, we obtain the desired inequality. 
	
	Now, using the fact that \( g \) is of class \( (S_+) \), we conclude that the subsequence \( \{ z_n \} \) strongly converges to \( \bar{x} \in \textbf{S} \). Finally, since the real sequence \( \{\| x_n - z_{n+1} \|\} \) converges to zero, it follows that the whole sequence \( \{ x_n \} \) strongly converges to \( \bar{x} \).
\end{proof}

\begin{remark}
	Consider the case where \( g(x,y) = \langle A x, y - x \rangle \) for a single-valued operator \( A: \textbf{K} \to \mathcal{H} \), and recall the notion of an operator of class \( (S_+) \): if a sequence \( \{u_n\} \) in \( \textbf{K} \) satisfies \( u_n \rightharpoonup x \) and
	\[
	\limsup_{n\rightarrow +\infty} \langle A u_n, u_n - u \rangle \leq 0,
	\]
	then \( u_n \to u \) in norm.

	This condition \( (S_+) \) was first introduced by Browder \cite{brow} in connection with the study of nonlinear eigenvalue problems and was further examined in detail in \cite{[7]}.

	We have that the bifunction 
	\[
	G(x, y) = \langle A x, y - x \rangle
	\]
	is of class \( (S_+) \) if and only if the operator \( A \) is of class \( (S_+) \).
\end{remark}


\section{Numerical Example}\label{S3}

In this section, we illustrate the strong convergence result of the proposed algorithm \textbf{(IPSA)} through a numerical experiment implemented in SCILAB-6.2. \\  
Let $C=\mathbb{R}^5$, and consider the bifunction:
\[
g(x, y) = \langle A x + B y, y - x \rangle, \quad \forall x, y \in \mathbb{R}^5,
\]
where
\[
A=\begin{bmatrix}
7 & 3 & 0 & 1 & 1 \\
3 & 9 & 1 & 5 & 4 \\
0 & 1 & 10 & 3 & -4 \\
1 & 5 & 3 & 9 & -1 \\
1 & 4 & -4 & -1 & 9
\end{bmatrix}, \quad 
B=\begin{bmatrix}
5 & 3 & -1 & 1 & 2 \\
3 & 6 & 1 & 4 & 3 \\
-1 & 1 & 7 & 2 & -3 \\
1 & 4 & 2 & 7 & -2 \\
2 & 3 & -3 & -2 & 7
\end{bmatrix}.
\]
Since
\[
g(x, y) + g(y, x) = -\|x - y\|_{A-B}^2,
\]
and $A - B$ is positive definite, it follows that $g$ is strongly monotone. \\

Let $f(x, y) = \varphi(y) - \varphi(x)$ for all $x, y \in \mathbb{R}^5$, where
\[
\varphi(x) = \max \{1, \|x\|\}, \quad \forall x \in \mathbb{R}^5.
\]
It is easy to verify that $f$ is maximal monotone. 

To solve problem $\mathbf{(BEP)}$, we employ Algorithm \textbf{(IPSA)}, selecting the parameters as follows:  
starting from the initial values $x_0 = x_1 = (1,1,1,1,1)$ and setting $\lambda_n = \frac{1}{n}$, we use the iterative error $\|x_n - \bar{x}\|_2$ as a measure to evaluate the computational performance of our algorithm. 

The numerical results in Figure 1(a) illustrate the convergence rate of $\|x_n - \bar{x}\|_2$ for $\beta_n = (1+n)$ and different choices of $\alpha_n$. Meanwhile, Figure 1(b) depicts the convergence rate of $\|x_n - \bar{x}\|_2$ for various choices of $\beta_n$ when $\alpha_n = 0.1 - \frac{1}{n}$.

\begin{flushleft}
\begin{figure}[h]
    \subfloat[For $\beta_n = (1+n)$ and different $\alpha_n$]{ \includegraphics[scale=0.17]{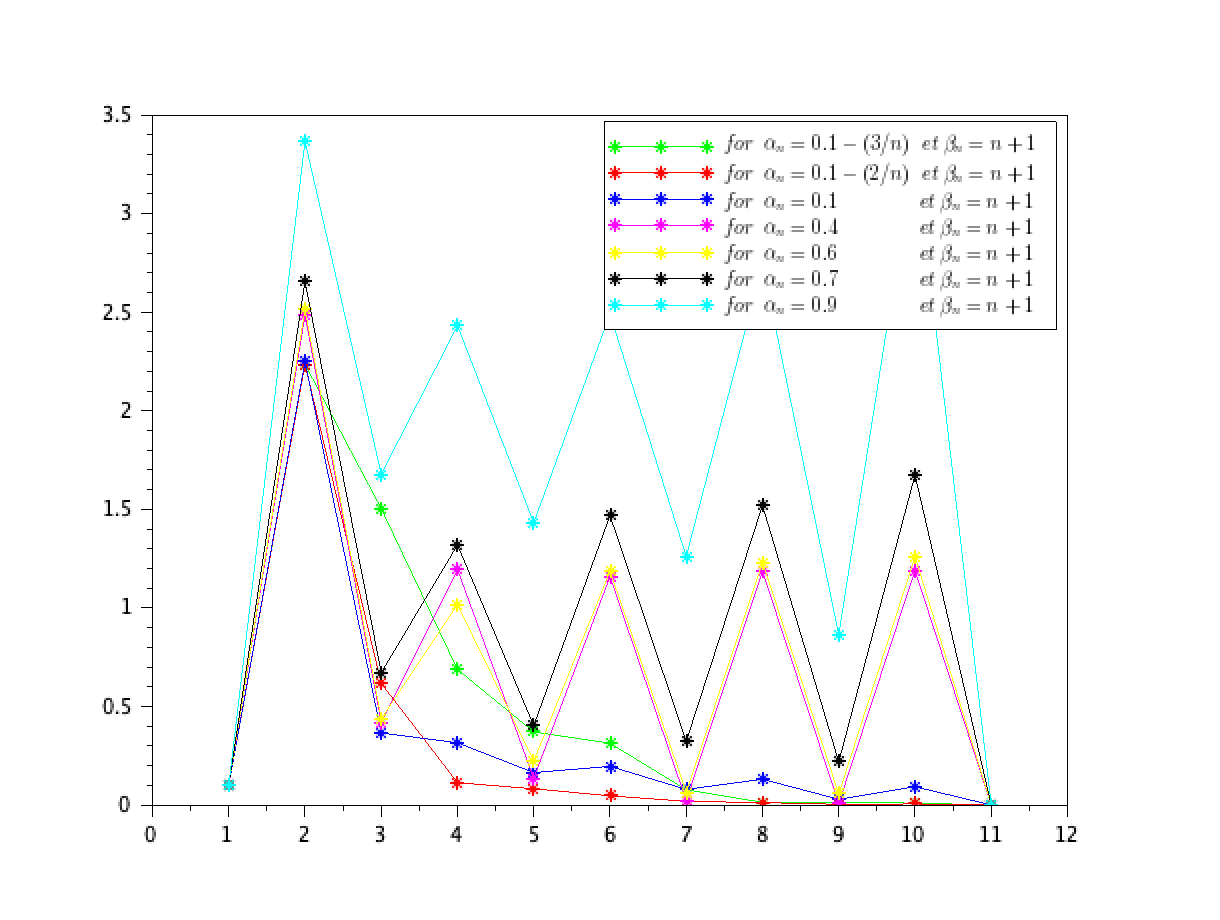}}
    \subfloat[For $\alpha_n = 0.1-\frac{1}{n}$ and different $\beta_n$]{ \includegraphics[scale=0.17]{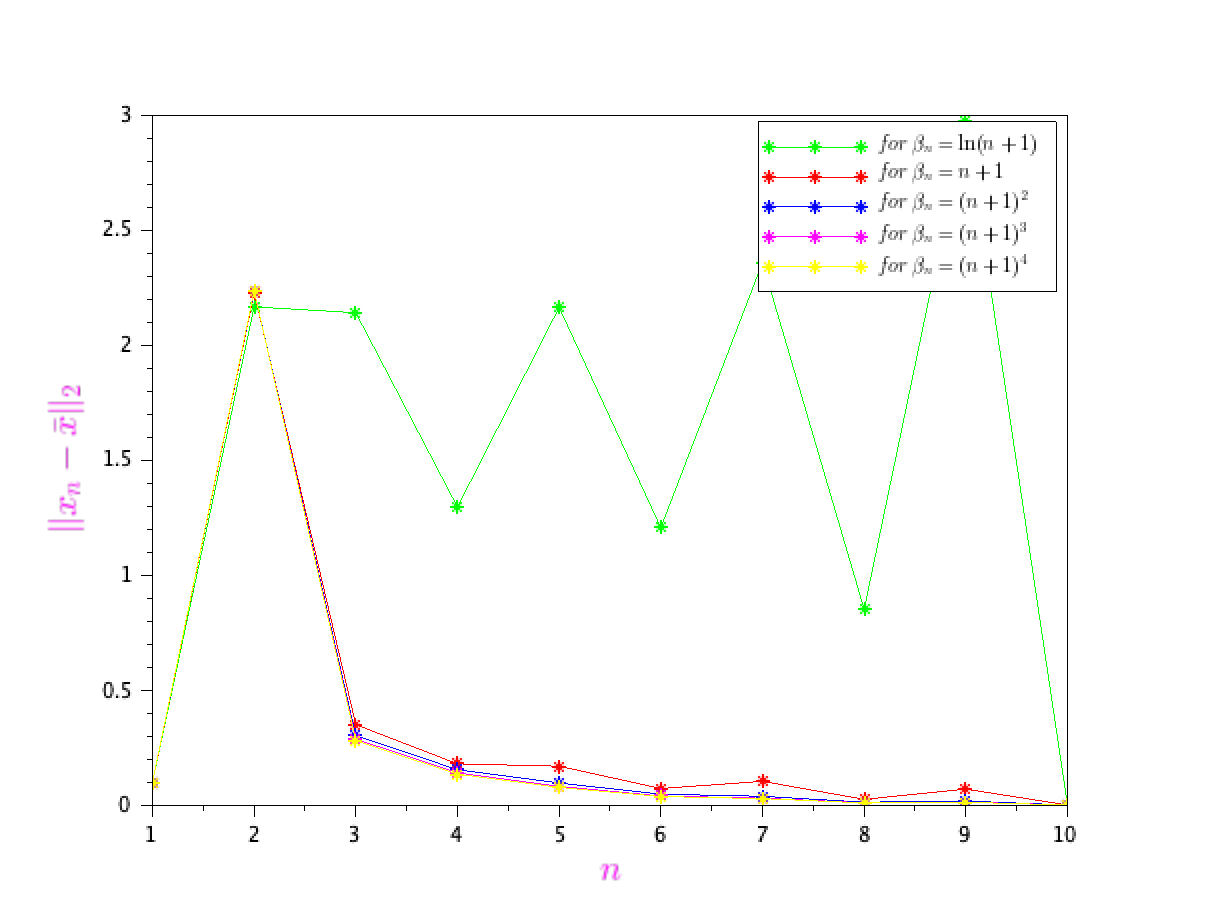}}
    \caption{The rate of convergence of $\|x_n - \bar{x}\|_2$.}
\end{figure}
\end{flushleft}

From Figure 1(b), we observe that when $\beta_n$ increases more rapidly at infinity, the sequence $\|x_n - \bar{x}\|_2$ decreases to $0$ at a higher convergence rate. Conversely, in Figure 1(a), we note that the sequence $\{\alpha_n\}$ exhibits an inverse behavior when it deviates significantly from the value $\frac{\sqrt{3}-1}{4}$. Specifically, the convergence speed of $\|x_n - \bar{x}\|_2$ deteriorates when the values of $\alpha_n$ are either too small or exceed $\frac{\sqrt{3}-1}{4}$. This highlights the importance of ensuring that $\alpha_n < \frac{\sqrt{3}-1}{4}$, in accordance with our theoretical results.

\appendix \section{Auxiliary Results}
In our convergence analysis of Algorithm \textbf{(IPSA)}, we rely on the following results.

\begin{lemma}[Discrete Opial's Lemma, \cite{opial}]\label{disc-opial}
    Let $C$ be a nonempty subset of $\mathcal{H}$ and $(x_k)_{k\geq0}$ be a sequence in $\mathcal{H}$ satisfying the following conditions:
    \begin{itemize}
        \item [(i)] For every $x\in C$,  $\displaystyle \lim_{n\rightarrow +\infty}\|x_n - x\|$ exists;
        \item [(ii)] Every weak sequential cluster point of $(x_k)_{k\geq0}$ lies in $C$.
    \end{itemize}
    Then, $(x_k)_{k\geq0}$ converges weakly to an element in $C$.
\end{lemma}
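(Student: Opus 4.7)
The plan is to prove this classical Opial-type lemma via the standard two-step argument: first show boundedness so that weak cluster points exist, then show uniqueness of the weak cluster point by exploiting condition (i) together with the Hilbert space inner product structure.

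First I would fix an arbitrary $x \in C$ (which exists since $C$ is nonempty). By condition (i), the real sequence $\|x_n - x\|$ has a limit in $\R$, hence is bounded. The triangle inequality then gives boundedness of $\{x_n\}$ in $\mathcal{H}$, so by the Banach--Alaoglu theorem (since bounded sets in a Hilbert space are weakly sequentially precompact), there exists at least one weak sequential cluster point of $\{x_n\}$. By (ii) any such cluster point lies in $C$.

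The key step is to show that the weak cluster point is unique. Suppose, for contradiction, that $\bar{x}_1$ and $\bar{x}_2$ are two weak cluster points of $\{x_n\}$, with subsequences $\{x_{n_k}\} \rightharpoonup \bar{x}_1$ and $\{x_{m_k}\} \rightharpoonup \bar{x}_2$. By (ii) both $\bar{x}_1, \bar{x}_2 \in C$, so by (i) both $\lim_{n\to +\infty}\|x_n - \bar{x}_1\|$ and $\lim_{n\to +\infty}\|x_n - \bar{x}_2\|$ exist. I would then expand
\begin{equation*}
\|x_n - \bar{x}_1\|^2 - \|x_n - \bar{x}_2\|^2 = 2\langle x_n, \bar{x}_2 - \bar{x}_1\rangle + \|\bar{x}_1\|^2 - \|\bar{x}_2\|^2,
\end{equation*}
whose left-hand side converges as $n\to+\infty$. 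Hence $\langle x_n, \bar{x}_2 - \bar{x}_1\rangle$ converges. Evaluating this limit along the subsequences $\{x_{n_k}\}$ and $\{x_{m_k}\}$ yields $\langle \bar{x}_1, \bar{x}_2 - \bar{x}_1\rangle = \langle \bar{x}_2, \bar{x}_2 - \bar{x}_1\rangle$, which simplifies to $\|\bar{x}_1 - \bar{x}_2\|^2 = 0$, so $\bar{x}_1 = \bar{x}_2$.

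Finally, since $\{x_n\}$ is bounded with a unique weak cluster point, a standard subsequence argument (if some subsequence did not converge weakly to this common cluster point $\bar{x}$, one could extract a further subsequence converging weakly to a point distinct from $\bar{x}$, contradicting uniqueness) yields that the entire sequence converges weakly to $\bar{x} \in C$. The main (and essentially only) obstacle is the uniqueness step, where the Hilbert space structure is crucial; everything else is routine.
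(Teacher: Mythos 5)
Your proof is correct and complete: boundedness from condition (i), existence of weak cluster points by weak sequential compactness of bounded sets in $\mathcal{H}$, uniqueness via the convergence of $\langle x_n,\bar{x}_2-\bar{x}_1\rangle$ evaluated along the two subsequences, and the standard subsequence argument to pass from a unique weak cluster point to weak convergence of the whole sequence. The paper itself gives no proof of this lemma --- it is stated as a known result with a citation to Opial --- and your argument is precisely the classical one found in the literature, so there is nothing to compare beyond noting that your write-up fills in the omitted standard proof correctly.
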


\begin{lemma}\label{lem-cont}
    Let $\{\lambda_n\}$ and $\{\mu_n\}$ be two real sequences. Suppose $\sum \lambda_n \mu_n < +\infty$ and $\sum \lambda_n = +\infty$, then 
    \[
    \liminf _{n \rightarrow+\infty} \mu_n \leq 0.
    \]
\end{lemma}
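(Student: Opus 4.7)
The plan is a short proof by contradiction. The statement implicitly requires $\lambda_n\geq 0$ (otherwise $\sum\lambda_n=+\infty$ is ambiguous, and the conclusion can fail: take $\lambda_n=(-1)^n$, $\mu_n=1$); I would state this at the start of the proof and then assume $\lambda_n\geq 0$ throughout.

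Suppose for contradiction that $\liminf_{n\to+\infty}\mu_n>0$. Then there exist $\varepsilon>0$ and an index $N\in\mathbb{N}$ such that $\mu_n\geq\varepsilon$ for every $n\geq N$. Multiplying by $\lambda_n\geq 0$ gives $\lambda_n\mu_n\geq\varepsilon\lambda_n$ for all $n\geq N$. The tail of a divergent series of nonnegative terms still diverges, so summing from $N$ onwards yields
\[
\sum_{n\geq N}\lambda_n\mu_n \;\geq\; \varepsilon\sum_{n\geq N}\lambda_n \;=\; +\infty.
\]
On the other hand, the hypothesis $\sum_{n\geq 0}\lambda_n\mu_n<+\infty$ (together with finiteness of the first $N$ terms) forces $\sum_{n\geq N}\lambda_n\mu_n<+\infty$, a contradiction. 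Therefore $\liminf_{n\to+\infty}\mu_n\leq 0$.

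There is essentially no main obstacle, since the argument is a one-line contradiction using only the definition of $\liminf$ and the monotonicity of the series once $\mu_n$ is bounded below by a positive constant. The only subtle point worth flagging is the sign convention on $\{\lambda_n\}$: without $\lambda_n\geq 0$ neither hypothesis is well posed in the form used in Theorems \ref{thm-weak} and \ref{strong1}, where this lemma is invoked with $\lambda_n$ playing the role of a step size and $\mu_n$ the role of $g(u,x_{n+1})$. I would conclude with a brief remark that this is precisely the discrete analogue of the classical fact that an integrable nonnegative weight against a divergent measure forces the integrand to have nonpositive lower limit.
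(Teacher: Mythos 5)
Your proof is correct and is essentially identical to the paper's: both argue by contradiction, extracting $\delta>0$ and $N_0$ with $\mu_n\geq\delta$ for $n\geq N_0$ and deriving $+\infty=\delta\sum_{n\geq N_0}\lambda_n\leq\sum_n\lambda_n\mu_n<+\infty$. Your remark that $\lambda_n\geq 0$ must be assumed is a fair observation the paper leaves implicit (the $\lambda_n$ are positive step sizes throughout), but it does not change the argument.
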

\begin{proof}
    If $\displaystyle \liminf _{n \rightarrow+\infty} \mu_n =\sup_{N>0}\inf_{n\geq N}\mu_n> 0$, then for some $N_0 > 0$ and $\delta > 0$, we have $\mu_n\geq\delta$ for each $n\geq N_0$. This leads to   
    \[
    +\infty = \delta \sum_{n\geq N_0} \lambda_n \leq \sum_{n\geq 1} \lambda_n \mu_n < +\infty,
    \]
    a contradiction.
\end{proof}	
\begin{lemma}\label{lem-sum}
	Let $0\leq p< 1$, and let $\{b_k\}$ and $\{w_k\}$ be two sequences of nonnegative numbers such that, for all $k\geq 0$, 
	\begin{equation*}
	b_{k+1}\leq pb_k+w_k.
	\end{equation*} 
	If $\{w_k\}$ is bounded, then  $\{b_k\}$ is bounded.
	Furthemore, if \;$\sum_{k=0}^{+\infty} w_k< +\infty$, then \;$\sum_{k=0}^{+\infty} b_k< +\infty$.
\end{lemma}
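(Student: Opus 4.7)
The plan is to prove both assertions directly from the recursive inequality $b_{k+1} \leq p b_k + w_k$ by exploiting the fact that $0 \leq p < 1$ provides a geometric contraction.

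For the boundedness claim, I would unroll the recursion by induction to obtain, for every $k \geq 0$,
\[
b_{k+1} \leq p^{k+1} b_0 + \sum_{j=0}^{k} p^{k-j} w_j.
\]
If $\{w_k\}$ is bounded by some constant $M \geq 0$, the right-hand side is majorized by $p^{k+1} b_0 + M \sum_{j=0}^{k} p^{k-j}$, which in turn is bounded by $b_0 + \frac{M}{1-p}$ independently of $k$. This yields the uniform bound on $\{b_k\}$.

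For the summability claim, I would set $S_N := \sum_{k=0}^{N} b_k$ and $W := \sum_{k=0}^{+\infty} w_k < +\infty$. Summing the recursive inequality from $k = 0$ to $k = N$ and using $\sum_{k=0}^{N} b_{k+1} = S_{N+1} - b_0$, I obtain
\[
S_{N+1} - b_0 \leq p S_N + \sum_{k=0}^{N} w_k \leq p S_{N+1} + W,
\]
where the last step uses nonnegativity of $b_{N+1}$ to enlarge $p S_N$ to $p S_{N+1}$. Rearranging gives $(1-p) S_{N+1} \leq b_0 + W$, so $S_N \leq \frac{b_0 + W}{1-p}$ uniformly in $N$. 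Since $\{b_k\}$ consists of nonnegative terms, the partial sums form a nondecreasing bounded sequence, whence $\sum_{k=0}^{+\infty} b_k$ converges.

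This is essentially a discrete Grönwall-type argument, and I do not anticipate any serious obstacle. The only subtlety is the rearrangement in the summability part: substituting $p S_N \leq p S_{N+1}$ is what permits one to solve for $S_{N+1}$ using the positive factor $1-p$, and this substitution is precisely valid because the sequence $\{b_k\}$ is nonnegative.
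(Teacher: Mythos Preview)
Your proposal is correct and follows essentially the same approach as the paper: for boundedness, both unroll the recursion inductively and bound the resulting geometric sum; for summability, both sum the inequality and telescope to isolate $(1-p)\sum b_k$. The only cosmetic difference is that the paper first rewrites the hypothesis as $(1-p)b_k \leq b_k - b_{k+1} + w_k$ before summing, whereas you sum directly and then enlarge $pS_N$ to $pS_{N+1}$, but these are algebraically equivalent manipulations.
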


\begin{proof}
	If 	$\{w_k\}$ is bounded, then exists $C>0$ such that for all $k\geq 1 ,$  $w_k\leq C.$  Hence 
	$$b_{k+1}\leq pb_k+C,\quad \mbox{for all}\quad k\geq 1.$$
	Recursively we obtain for all $n\geq n_0\geq 1$
	\begin{align*}
	b_{n+1}&\leq p^{n-n_0}b_{n_0}+C(1+p+p^2+...+p^{n-n_0-1})
	\\
	&=p^{n-n_0}a_{n_0}+C\frac{1-p^{n-n_0}}{1-p}.
	\end{align*}
	Therefore the sequence $\{b_k\}$ is bounded. On the other side
	we have $$(1-p)b_k\leq b_k-b_{k+1}+w_k.$$	
	Summing up from  $k=0$ to $n$, we get
	\begin{align*}
	(1-p)\sum_{k=0}^{n}b_k&\leq \sum_{k=0}^{n}(b_k-b_{k+1})+\sum_{k=0}^{n}w_k\\
	&=b_0-b_{n+1}+\sum_{k=0}^{n}w_k \\
	&\leq b_0+\sum_{k=0}^{n}w_k.
	\end{align*}
	Since $1-p\geq 0$ and $\sum_{k=0}^{+\infty} w_k< +\infty$, we conclude that $\sum_{k=0}^{+\infty} b_k< +\infty$.
\end{proof}
\begin{lemma}\label{lem-sum1}
	Let $0\leq \alpha <1$, and let{,} $\{\alpha_k\}, $ $\{a_k\},$ $\{\Delta_k\}$ and $\{w_k\}$ be sequences of nonnegative numbers such that $\left\{\alpha_k\right\}\subseteq[0, \alpha]$ and  for all $k\geq 1$,  
	\begin{equation}\label{de}
	a_{k+1}\leq (\alpha_k+1)a_{k}- \alpha_{k-1} a_{k-1}-\Delta_k+w_k.
	\end{equation} 
	If \;$\sum_{k=0}^{+\infty} w_k< +\infty$, then
	\begin{description}
		\item[$i)$]	 $a_k$ is bounded;
		\item[$ii)$]  $\sum_{k=0}^{+\infty}\Delta_k< +\infty$;
		\item[$iii)$]  $\displaystyle \lim_{k\rightarrow +\infty}a_k$ exists.
	\end{description}
\end{lemma}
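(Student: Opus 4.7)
The plan is to collapse the two-step recursion \eqref{de} to a one-step telescoping inequality via the auxiliary sequence
$$\phi_k := a_k - \alpha_{k-1}\,a_{k-1}, \quad k \ge 1,$$
so that \eqref{de} immediately rewrites as
$$\phi_{k+1} \;=\; a_{k+1} - \alpha_k a_k \;\le\; (a_k - \alpha_{k-1}\,a_{k-1}) - \Delta_k + w_k \;=\; \phi_k - \Delta_k + w_k.$$
This scalar one-step inequality is the engine driving all three conclusions.

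For (i), dropping $\Delta_k \ge 0$ and iterating gives $\phi_{k+1} \le \phi_1 + W$ with $W:=\sum_{j\ge 1}w_j <+\infty$. Since $a_{k+1} = \alpha_k a_k + \phi_{k+1} \le \alpha\, a_k + (\phi_1 + W)$ and $0 \le \alpha < 1$, induction yields $a_k \le \alpha^k a_0 + (\phi_1+W)/(1-\alpha)$, proving boundedness. Boundedness of $\{a_k\}$ forces $\{\phi_k\}$ to be two-sided bounded; telescoping the estimate $\phi_{k+1}-\phi_k \le -\Delta_k + w_k$ from $k=1$ to $n$ gives $\sum_{k=1}^n \Delta_k \le \phi_1 - \phi_{n+1} + W$, and letting $n \to \infty$ produces (ii).

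For (iii), introduce $\eta_k := \phi_k + \sum_{j \ge k} w_j$. The main inequality rearranges to $\eta_{k+1} \le \eta_k - \Delta_k \le \eta_k$, so $\{\eta_k\}$ is nonincreasing and bounded below, hence convergent; since $\sum_{j \ge k} w_j \to 0$, we deduce $\phi_k \to \phi^* \in \mathbb{R}$. To lift the convergence of $\phi_k$ to the convergence of $a_k$, iterate the identity $a_{k+1} = \alpha_k a_k + \phi_{k+1}$ into
$$a_{n+1} = \Big(\prod_{j=0}^{n} \alpha_j\Big)\, a_0 + \sum_{j=1}^{n+1} \phi_j \prod_{i=j}^{n} \alpha_i;$$
the first summand vanishes because $\alpha_k \le \alpha < 1$, and a Toeplitz-averaging argument transfers the limit $\phi_j \to \phi^*$ through the weighted tail sum to obtain a limit for $a_n$. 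The main obstacle is precisely this final passage: the reduction to $\phi_k$ only captures the ``inertial-corrected'' quantity, and extracting $\lim a_k$ from $\lim \phi_k$ requires the stabilization of the multiplicative weights $\prod_{i=j}^n \alpha_i$, which is the point at which structural regularity of $\{\alpha_k\}$ (such as the monotonicity that is in force throughout the applications of the lemma) enters.
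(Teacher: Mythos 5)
Your parts (i) and (ii) are correct and are essentially the paper's own argument in different notation: the substitution $\phi_k=a_k-\alpha_{k-1}a_{k-1}$ turns \eqref{de} into the telescoped inequality $\phi_{k+1}\le\phi_k-\Delta_k+w_k$, which is exactly what the paper obtains by summing \eqref{de} before invoking Lemma~\ref{lem-sum}; boundedness of $\{a_k\}$ and summability of $\{\Delta_k\}$ then follow as you say.

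Part (iii) is where you diverge from the paper and where there is a genuine gap, one you partly flag yourself. Obtaining $\phi_k\to\phi^*$ from the nonincreasing auxiliary $\eta_k=\phi_k+\sum_{j\ge k}w_j$ is fine. But the passage from $\lim\phi_k$ to $\lim a_k$ through $a_{n+1}=\bigl(\prod_{j=0}^{n}\alpha_j\bigr)a_0+\sum_{j=1}^{n+1}\phi_j\prod_{i=j}^{n}\alpha_i$ requires, in any Toeplitz-type argument, that the row sums $T_n:=\sum_{j=1}^{n+1}\prod_{i=j}^{n}\alpha_i$ converge; since $T_n=1+\alpha_nT_{n-1}$, this fails for a general $\{\alpha_k\}\subseteq[0,\alpha]$. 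Worse, the step cannot be repaired from the stated hypotheses alone: take $\alpha=\tfrac12$, $\alpha_k=0$ for $k$ even and $\alpha_k=\tfrac12$ for $k$ odd, $\Delta_k=w_k=0$, $a_0=a_1=1$, with equality in \eqref{de}. Then $\phi_k\equiv1$ converges and every hypothesis of the lemma holds, yet $a_k$ oscillates between $1$ and $\tfrac32$, so conclusion (iii) itself fails without some regularity of $\{\alpha_k\}$ (monotonicity or bounded variation, which does hold in every application in the paper). For comparison, the paper proves (iii) by a different and more direct route: it passes to $[a_{n+1}-a_n]_+\le\alpha[a_n-a_{n-1}]_++w_n$, applies Lemma~\ref{lem-sum} to get $\sum_n[a_n-a_{n-1}]_+<+\infty$, and concludes because $a_n-\sum_{k\le n}[a_k-a_{k-1}]_+$ is then nonincreasing and bounded below. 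That route avoids your product weights, but it too silently absorbs the cross term $(\alpha_n-\alpha_{n-1})a_{n-1}$, so the same structural assumption on $\{\alpha_n\}$ is hidden there; your analysis has in effect located an imprecision in the lemma's statement rather than only a defect of your own method.
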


\begin{proof}
$i)$: Summing the inequality \eqref{de} from $k=1$ to $n$, we obtain   
	$$a_{n+1}-a_{1}\leq \left(\alpha_n a_{n}-\alpha_0 a_{0}\right)-\sum_{k=1}^{n}\Delta_k+\sum_{k=1}^{n}w_k.$$
	This implies 
	$$a_{n+1}\leq \alpha a_{n}+ a_{1}+ \sum_{k=1}^{n}w_k,$$
	since  $\sum_{k=0}^{+\infty} w_k< +\infty$. By Lemma \ref{lem-sum}, we conclude that 
	$\{a_n\}$  is bounded. \\
	
	\noindent
$ii)$: We have 
	$$ \sum_{k=1}^{n}\Delta_k \leq a_{1} +\alpha a_{n}+\sum_{k=1}^{n}w_k.$$
	Since $\{a_n\}$    is bounded and  $\sum_{k=0}^{+\infty} w_k< +\infty$, we conclude  
	$$\sum_{k=1}^{+\infty}\Delta_k<+\infty.$$	
$iii)$: 
	Taking the positive part in the inequality \eqref{de}, we find
	\begin{equation*}
	[a_{n+1}-a_n]_+\leq\alpha[a_n-a_{n-1}]_+ +w_k.
	\end{equation*}
	Let us apply Lemma \ref{lem-sum}   by taking $b_n=[a_n-a_{n-1}]_+,$ we conclude
	$$\sum_{n=1}^{+\infty}[a_n-a_{n-1}]_+<+\infty.$$ 
	Since $a_n$ is nonnegative, this implies the existence of $\displaystyle \lim_{n\rightarrow +\infty} a_n$, which ends the proof.		
\end{proof}

We also need the following two technical {lemmata}.
\begin{lemma}\cite{Bauchk}\label{lem5-a}
	For all $x, y \in \mathcal{H}$ and $\beta\in\mathbb{R},$ the following equality holds,
	\begin{equation*}
	\|\beta x+ (1-\beta)y\|^{2} = \beta\|x\|^{2} + (1-\beta)\|y\|^{2} -\beta(1-\beta)\|x-y\|^{2}.
	\end{equation*}
\end{lemma}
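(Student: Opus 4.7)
The plan is to verify this identity by direct expansion using the inner product structure of $\mathcal{H}$, since both sides are quadratic polynomials in $\beta$ with coefficients built from $\|x\|^2$, $\|y\|^2$, and $\langle x,y\rangle$. There is no analytic or convergence content here; the statement is a purely algebraic identity valid in any real inner product space.

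First, I would expand the left-hand side via $\|u\|^2=\langle u,u\rangle$ and bilinearity:
\[
\|\beta x+(1-\beta)y\|^{2}=\beta^{2}\|x\|^{2}+2\beta(1-\beta)\langle x,y\rangle+(1-\beta)^{2}\|y\|^{2}.
\]
Next, I would expand $\|x-y\|^{2}=\|x\|^{2}-2\langle x,y\rangle+\|y\|^{2}$, multiply by $\beta(1-\beta)$, and add the result to the right-hand side candidate:
\[
\beta\|x\|^{2}+(1-\beta)\|y\|^{2}-\beta(1-\beta)\bigl(\|x\|^{2}-2\langle x,y\rangle+\|y\|^{2}\bigr).
\]
Collecting the coefficients of $\|x\|^{2}$ gives $\beta-\beta(1-\beta)=\beta^{2}$, and those of $\|y\|^{2}$ give $(1-\beta)-\beta(1-\beta)=(1-\beta)^{2}$, while the inner product term contributes $2\beta(1-\beta)\langle x,y\rangle$. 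These match the three coefficients obtained in the expansion of the left-hand side, proving the equality.

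Since every step is a mechanical application of bilinearity and symmetry of the inner product, there is no real obstacle: the only ``hard'' part is bookkeeping the coefficients $\beta^{2}+\beta(1-\beta)=\beta$ and $(1-\beta)^{2}+\beta(1-\beta)=1-\beta$ correctly. The identity holds for arbitrary real $\beta$ (not just $\beta\in[0,1]$), which is consistent with its use in the paper where it is invoked with $\beta=1+\alpha_n\geq 1$ and $\beta=-\alpha_n\leq 0$ to unfold $\|y_n-u\|^{2}$ from $y_n=(1+\alpha_n)x_n-\alpha_n x_{n-1}$.
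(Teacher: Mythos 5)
Your proof is correct: the paper itself gives no proof of this lemma (it simply cites Bauschke--Combettes), and your direct expansion via bilinearity of the inner product is exactly the standard argument, with all coefficients checked correctly. Your remark that the identity holds for arbitrary real $\beta$ is also apt, since the paper applies it in \eqref{8a} with the coefficient $1+\alpha_n \geq 1$ lying outside $[0,1]$.
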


\begin{lemma}\cite{CR1}\label{lem3a}
	Let $\{a_n\}$ be a sequence of real numbers that does not decrease at
	infinity, in the sense that there exists a subsequence $\{a_{n_k}\}_ {k\geq0}$ of $\{a_n \}$ which satisfies
	$$a_{n_k} < a_{n_k+1} \quad \mbox{for all} \quad k \geq 0.$$ Then, the sequence of integers $\{\sigma(n)\}_{n\geq n_0}$ defined
	by
	$\sigma(n) := \max\{k \leq n : a_k < a_{k+1} \}$
	is a nondecreasing sequence verifying $\displaystyle \lim_{n\rightarrow +\infty} \sigma(n) = \infty$ and, for all $n \geq n_0$
	$$a_{\sigma(n)} < a_{\sigma(n)+1}\quad \mbox{and}\quad a_{n}\leq a_{\sigma(n)+1}.$$
\end{lemma}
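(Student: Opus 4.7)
The plan is to verify the claims in the order they are stated, starting with well-definedness of $\sigma$. By hypothesis there is a subsequence $\{n_k\}_{k\geq 0}$, which we may assume strictly increasing, with $a_{n_k}<a_{n_k+1}$ for every $k$. Let $n_0$ denote the first such index. Then for every $n\geq n_0$ the set $\{k\leq n : a_k<a_{k+1}\}$ contains $n_0$ and is bounded above by $n$, so its maximum $\sigma(n)$ is well-defined as an integer in $[n_0,n]$.

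Monotonicity of $\sigma$ is immediate from set inclusion: if $n_0\leq n\leq n'$, then $\{k\leq n : a_k<a_{k+1}\}\subseteq\{k\leq n' : a_k<a_{k+1}\}$, hence $\sigma(n)\leq\sigma(n')$. For the divergence $\sigma(n)\to+\infty$, I observe that for each fixed $k_0$, once $n\geq n_{k_0}$ the index $n_{k_0}$ belongs to the defining set, so $\sigma(n)\geq n_{k_0}$; since $n_{k_0}\to+\infty$ as $k_0\to+\infty$, the conclusion follows.

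The strict inequality $a_{\sigma(n)}<a_{\sigma(n)+1}$ is simply the membership of $\sigma(n)$ in its own defining set. The mildly delicate step, and the main (if modest) obstacle, is the inequality $a_n\leq a_{\sigma(n)+1}$, which I would handle by case analysis. If $n=\sigma(n)$, it reduces to the previous strict inequality. Otherwise $n>\sigma(n)$, and the maximality of $\sigma(n)$ forces every integer $k$ with $\sigma(n)<k\leq n$ to satisfy $a_k\geq a_{k+1}$, since otherwise such a $k$ would lie in the defining set and contradict the choice of $\sigma(n)$. Telescoping these inequalities yields
\[
a_{\sigma(n)+1}\geq a_{\sigma(n)+2}\geq\cdots\geq a_n,
\]
which is the desired bound and completes the proof.
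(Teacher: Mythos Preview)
Your proof is correct and is the standard elementary argument for this lemma. Note that the paper does not actually supply a proof of this statement; it merely cites it from \cite{CR1}, so there is no in-paper proof to compare against.
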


\section*{Funding}
This research was partially supported by the FASIC project 49763ZL  and  by a public grant as part of the Investissement d'avenir project, reference ANR-11-LABX-0056-LMH, LabEx LMH.



\end{document}